\title[Twist spun knots of twist spun knots]{Twist spun knots of twist spun knots of classical knots}
\author[Fukuda]{Mizuki Fukuda}
\address{Mathematics for Advanced Materials Open Innovation Laboratory, AIST, c/o AIMR, Tohoku University, 2-1-1, Katahira, Aoba-ku, Sendai, Miyagi 980-8577, Japan}
\email{mizuki.fukuda.d2@tohoku.ac.jp}
\author[Ishikawa]{Masaharu Ishikawa}
\address{Department of Mathematics, Hiyoshi Campus, Keio University, 
4-1-1 Hiyoshi, Kohoku, Yokohama 223-8521, Japan}
\email{ishikawa@keio.jp}
\theoremstyle{plain}
\newtheorem*{theorem*}{Theorem}
\newtheorem*{lemma*} {Lemma}
\newtheorem*{corollary*} {Corollary}
\newtheorem*{proposition*}{Proposition}
\newtheorem*{conjecture*}{Conjecture}
\newtheorem{theorem}{Theorem}[section]
\newtheorem{lemma}[theorem]{Lemma}
\theoremstyle{remark}
\newtheorem{remark}[theorem]{Remark}
\newtheorem*{example*}{Example}
\theoremstyle{definition}
\newtheoremstyle{citing}% name
  {}%      Space above, empty = `usual value'
  {}%      Space below
  {\itshape}% Body font
  {}%         Indent amount (empty = no indent, \parindent = para indent)
  {\bfseries}% Thm head font
  {.}%        Punctuation after thm head
  {.5em}%     Space after thm head: " " = normal interword space;
\theoremstyle{citing}
\newcommand{\R}{\mathbb{R}}
\def\ees{{\accent"5E e}\kern-.385em\raise.55ex\hbox{\char'23}\kern-.08em}
\def\EES{{\accent"5E E}\kern-.5em\raise.8ex\hbox{\char'23}}
\def\ow{o\kern-.42em\raise.82ex\hbox{
\vrule width .12em height .0ex depth .075ex \kern-0.16em \char'56}\kern-.07em}
\def\OW{O\kern-.460em\raise1.36ex\hbox{
\vrule width .13em height .0ex depth .075ex \kern-0.16em \char'56}\kern-.07em}
\begin{document}

\begin{abstract}
A $k$-twist spun knot is an $n+1$-dimensional knot in the $n+3$-dimensional sphere which is obtained from an $n$-dimensional knot in the $n+2$-dimensional sphere by applying an operation called 
a $k$-twist-spinning.
This construction was introduced by Zeeman in 1965. 
In this paper, we show that the $m_2$-twist-spinning of the $m_1$-twist-spinning of a classical knot is a trivial $3$-knot in $S^5$ if $\gcd(m_1,m_2)=1$. We also give a sufficient condition for 
the $m_2$-twist-spinning of the $m_1$-twist-spinning of a classical knot to be non-trivial.
\end{abstract}

\dedicatory{Dedicated to Professor~Ti\ees n-S\ow n Ph\d{a}m on the occasion of his 60th birthday.}

\maketitle

\section{Introduction}

To study fibered links in higher dimensional spheres geometrically, one idea is to relate them to lower dimensional fibered links. In singularity theory, a cyclic suspension, or more generally the Thom-Sebastiani construction, is quite common. It enables us to study the Milnor fibration of the singularity $(f,0)$ of a polynomial map $f:\mathbb C^{n+1}\to \mathbb C$ at the origin $0$ in the form $f({\bf z}, w)=g({\bf z})+w^k$, $({\bf z},w)\in\mathbb C^n\times \mathbb C$, by using the Milnor fiberation of the lower dimensional singularity $(g,0)$.
A topological construction of the cyclic suspension is studied by Kauffman in~\cite{Kau74}.
His construction can be applied for simple fibered links in the odd dimensional spheres.
The fibered knot in $S^{2n+1}$ obtained by a cyclic suspension 
is a cyclic branched cover of $S^{2n-1}$ along the lower dimensional fibered knot
and the fiber surface is obtained correspondingly.
The monodromy matrix is obtained from the monodromy matrix of the lower one by taking a certain tensor product. 
%Moreover, the monodromy is periodic, shifting the sheets of the cyclic branched cover by $1$.

In the study on higher dimensional knots, in 1965, Zeeman introduced a way of constructing an $n$-dimensional knot in $S^{n+2}$ from an $n-1$-dimensional knot in $S^{n+1}$ for $n\geq 2$~\cite{Zee65}.
This construction is called a {\it $k$-twist-spinning}. The knot $K^n$ in $S^{n+2}$ obtained from a knot $K^{n-1}$ in $S^{n+1}$ by the $k$-twist-spinning is called a {\it $k$-twist spun knot} of $K^{n-1}$.  
%\textcolor{red}{If $k=0$, then it is called a {\it spun knot}, which was introduced by Artin in 1925~\cite{Art25}.}
If $k\geq 1$, then a $k$-twist spun knot is a fibered knot and the fiber surface is the $k$-fold cyclic branched cover of $S^{n+1}$ along $K^{n-1}$ with removing one open ball. 
Moreover, the monodromy is periodic, shifting the sheets of the cyclic branched cover by $1$.
Properties of fibered knots obtained by a cyclic suspension and a $k$-twist-spinning are similar, though no relations between them are known yet.

%Recently, the authors studied a more general class of $k$-twist spun knots in $S^4$ called a {\it branched twist spin}.
%The paper~\cite{FI23} on the arXiv has a serious gap until the third version and is revised in the fourth version by adding a strong assumption that ``the center of $\pi_1^{orb}(\mathcal O(K,m))$ is trivial'', where $K$ is a knot in $S^3$ and 
%\begin{equation}\label{eq00}
%   \pi_1^{orb}(\mathcal O(K,m))\cong\langle x_1,\ldots, x_u\mid r_1,\ldots, r_v,\, \mu^m\rangle
%\end{equation}
%for a presentation $\langle x_1,\ldots, x_u\mid r_1,\ldots, r_v\rangle$ of the knot group $\pi_1(S^3\setminus K)$ of a knot $K$ in $S^3$ and a meridian $\mu$ of $K$. Using a Wirtinger presentation $\langle x_1,\ldots, x_u\mid r_1,\ldots, r_v\rangle$ of $\pi_1(S^3\setminus K)$, we may set $\mu=x_1$. Here we use the notation $\pi_1^{orb}(\mathcal O(K,m))$ since the group can be thought of as the fundamental group of the orbifold  $\mathcal O(K,m)$ with underlying space $S^3$ and ramification locus $K$ of order $m$. See for instance~\cite{Thu77, DM84, Tak88, BP01} for basics of such orbifolds. In~\cite{FI23}, it is proved, under the assumption that the center of  $\pi_1^{orb}(\mathcal O(K,m))$ is trivial, that branched twist spins are not equivalent if and only if the lower dimensional knots are not equivalent in most cases. 
%Here two knots $K_1$ and $K_2$ are said to be {\it equivalent} if there exists a diffeomorphism $\phi$ from the sphere containing $K_1$ to the sphere containing $K_2$ satisfying  $\phi(K_1)=K_2$. 

Recently, the authors studied a more general class of $k$-twist spun knots in $S^4$ called a {\it branched twist spin}, and proved that, in most cases, branched twist spins are equivalent if and only if the corresponding  $1$-dimensional knots in $S^3$ are equivalent~\cite{FI23}. Here two knots $K_1$ and $K_2$ are said to be {\it equivalent} if there exists a diffeomorphism $\phi$ from the sphere containing $K_1$ to the sphere containing $K_2$ satisfying  $\phi(K_1)=K_2$. 
The key observation in that paper is that the quotient group of the fundamental group  of the complement of a branched twist spin of a $1$-dimensional knot $K$ in $S^3$ by its center is isomorphic to the fundamental group $\pi_1^{orb}(\mathcal O(K,m))$ of the $3$-orbifold $\mathcal O(K,m)$ of cyclic type with underlying space $S^3$ and ramification locus $K$ of order $m$ in most cases.

In this paper, we study a twist spun knot of a twist spun knot.
%, which is a $3$-dimensional knot in $S^5$. 
%Observing the same quotient group focused in~\cite{FI23}, we obtain  the following result.
%A $k$-knot is the image of an embedding of $S^k$ into $S^{k+2}$.
%The main result is the following.
Let $m_1$ and $m_2$ be positive integers and $\tau_{m_2}(\tau_{m_1}(K))$ be the $m_2$-twist spun knot of the $m_1$-twist spun knot of a knot $K$ in $S^3$, which is a $3$-knot in $S^5$. 

\begin{theorem}\label{thm01}
If either $K$ is trivial or $\gcd(m_1,m_2)=1$, then $\tau_{m_2}(\tau_{m_1}(K))$ is trivial.
\end{theorem}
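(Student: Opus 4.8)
The plan is to use the fibered structure of twist spun knots recalled above and to reduce triviality to a fundamental group computation for a cyclic branched cover of $S^4$. Write $\Sigma_m(K)$ for the $m$-fold cyclic branched cover of $S^3$ along $K$. Since $\tau_{m_1}(K)$ is a fibered $2$-knot in $S^4$, the complement of $\tau_{m_1}(K)$ fibers over $S^1$ with fiber $F_1 = \Sigma_{m_1}(K) \setminus \mathrm{int}\,B^3$ and with periodic monodromy $h_1$ of order $m_1$, the deck transformation shifting the sheets by one. Filling back the tubular neighborhood recovers $S^4$ and kills the meridian $t$, so by van Kampen $\pi_1(S^4)$ is the group of coinvariants $\pi_1(F_1)/\langle\langle\, x^{-1}(h_1)_*(x) : x \in \pi_1(F_1)\,\rangle\rangle$; in particular these coinvariants are trivial. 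Here $\pi_1(F_1) = \pi_1(\Sigma_{m_1}(K))$, since removing a ball does not change $\pi_1$.

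First I would identify the $m_2$-fold cyclic branched cover $W_2 := \Sigma_{m_2}(S^4; \tau_{m_1}(K))$; by the recalled description the fiber of $\tau_{m_2}(\tau_{m_1}(K))$ is $W_2$ with an open ball removed. Away from the branch locus, $W_2$ is the $m_2$-fold cyclic cover of the fibered complement above, hence fibers over $S^1$ with the same fiber $F_1$ but monodromy $h_1^{m_2}$, and filling the branch locus kills the class $t^{m_2}$. Thus $\pi_1(W_2)$ is the group of coinvariants of $(h_1)_*^{m_2}$ on $\pi_1(F_1)$. The point where the hypothesis enters is that the coinvariants of a finite-order automorphism depend only on the cyclic subgroup it generates: a quotient on which $(h_1)_*^{m_2}$ acts trivially is exactly one on which $(h_1)_*$ acts trivially. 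Since $\gcd(m_1,m_2)=1$ and $(h_1)_*$ has order dividing $m_1$, we have $\langle (h_1)_*^{m_2}\rangle = \langle (h_1)_*\rangle$, so the coinvariants of $(h_1)_*^{m_2}$ coincide with those of $(h_1)_*$, namely $\pi_1(S^4)=1$. Hence $\pi_1(W_2)=1$.

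Next I would check that $W_2$ is a homotopy $4$-sphere. It is closed and simply connected, and an Euler characteristic count for the cyclic branched cover gives $\chi(W_2) = m_2\,\chi(S^4 \setminus \tau_{m_1}(K)) + \chi(S^2) = 2$, so $H_2(W_2)=0$ and $W_2$ is a homotopy $4$-sphere. Consequently the fiber $F_2 := W_2 \setminus \mathrm{int}\,B^4$ of $\tau_{m_2}(\tau_{m_1}(K))$ is contractible, and the complement of this $3$-knot in $S^5$, being a mapping torus of a self-map of the contractible manifold $F_2$, is homotopy equivalent to $S^1$. The higher-dimensional unknotting theorem then yields that $\tau_{m_2}(\tau_{m_1}(K))$ is the trivial $3$-knot. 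The case of trivial $K$ is immediate: then $\Sigma_{m_1}(K) = S^3$, every cover in sight is standard, and the iterated twist spin is trivial directly.

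The step I expect to require the most care is the passage from ``complement homotopy equivalent to $S^1$'' to genuine triviality, since the fiber $F_2$ is $4$-dimensional: identifying a contractible $4$-manifold bounded by $S^3$ with $D^4$ touches the smooth four-dimensional Poincar\'e phenomenon, so the cleanest route is to apply the unknotting theorem for the homotopy-circle complement directly, invoking Freedman's theorem if a point-set identification of $W_2$ with $S^4$ is wanted. A second point needing careful bookkeeping is the meridian computation showing that passing to the $m_2$-fold branched cover replaces the monodromy by $h_1^{m_2}$ and kills exactly $t^{m_2}$. Granting these, the coprimality reduction of the coinvariants is short and constitutes the conceptual heart of the argument.
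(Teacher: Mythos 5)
Your proposal is correct in substance, but it takes a genuinely different route from the paper. The paper never computes $\pi_1$ of the branched cover: in the case $\gcd(m_1,m_2)=1$ it observes that cutting the exterior of $\tau_{m_1}(K)$ along a fiber, taking $m_2$ cyclic copies, regluing and refilling by $S^2\times D^2$ produces exactly the exterior of Pao's branched twist spin $\tau_{m_1,m_2}(K)$ --- this is where coprimality enters, since branched twist spins are only defined for coprime parameters --- and then quotes Pao's theorem \cite[Theorem~4]{Pa78} that the resulting manifold is \emph{diffeomorphic} to $S^4$. The fiber of $\tau_{m_2}(\tau_{m_1}(K))$ is then a standard $D^4$, and triviality follows from the knot bounding a ball; no homotopy-sphere ambiguity ever arises. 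You instead show the branched cover $W_2$ is a homotopy $4$-sphere by identifying $\pi_1(W_2)$ with the coinvariants of the $m_2$-th power of the periodic monodromy, using coprimality and periodicity to equate these with the coinvariants of the monodromy itself (trivial, being $\pi_1(S^4)$), and then invoke the unknotting theorem for the homotopy-circle complement (Levine for $n\geq 4$; the $n=3$ case needed here is due to Shaneson and Wall). What each approach buys: Pao's theorem gives the paper a smooth, purely $4$-dimensional identification, but it is intrinsically $4$-dimensional, which is precisely why Remark~\ref{rem34} of the paper leaves the higher-dimensional analogue open; your coinvariants argument avoids Pao and is in principle dimension-independent, so it is the natural attack on that question --- though in odd ambient dimensions showing the branched cover is a homotopy sphere requires an actual homology computation (e.g.\ a Wang-sequence argument) rather than your Euler-characteristic count, which only works because $W_2$ is $4$-dimensional.

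Two points in your write-up need patching, though neither breaks the strategy. First, the identification of $\pi_1(S^4)$ (and of $\pi_1(W_2)$) with coinvariants of $(h_1)_*$ (resp.\ of $(h_1)_*^{m_2}$) has a twisting subtlety: the filling kills the meridian $\mu$, which equals the stable letter $t$ of the mapping torus only up to an element of $\pi_1(F_1)$, so the automorphism whose coinvariants you are actually taking is conjugation by $\mu$ rather than $(h_1)_*$ on the nose. This is repaired within the paper's own toolkit: by Lemma~\ref{lemma21}, $\mu^{m_1}=h^{-1}$ is central in $\pi_1(S^4\setminus\tau_{m_1}(K))$, so conjugation by $\mu$ restricted to $\pi_1(F_1)$ has order dividing $m_1$ exactly, and your telescoping-plus-coprimality argument runs verbatim for this automorphism (alternatively, place the basepoint on the branch arc, which the periodic monodromy fixes). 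Second, the trivial-$K$ case is not ``immediate'' as stated: ``every cover in sight is standard'' is exactly the content of Zeeman's result that a twist spun knot of a trivial knot is trivial, \cite[Corollary~3]{Zee65}, which is what the paper cites and what you should cite too (note that the smooth homotopy-circle criterion is not available for the intermediate $2$-knot in $S^4$, so the geometric citation is genuinely needed there).
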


It is shown in~\cite[Proposition~3.2]{FI23} that if $K$ is non-trivial and $m_1\geq 2$, then 
$\tau_{m_1}(K)$ is a non-trivial $2$-knot in $S^4$. 
Theorem~\ref{thm01} means that
even if $\tau_{m_1}(K)$ is non-trivial,  $\tau_{m_2}(\tau_{m_1}(K))$ becomes trivial 
if $\gcd(m_1,m_2)=1$. We do not know if the same observation works for higher dimensional cases, see Remark~\ref{rem34}.

The converse of the assertion in Theorem~\ref{thm01} holds under a certain condition.

\begin{theorem}\label{thm02}
Set $m=\gcd(m_1,m_2)$. If $K$ is non-trivial, $m\geq 2$, and the center of $\pi_1^{orb}(\mathcal O(K,m))$ is trivial, then  $\tau_{m_2}(\tau_{m_1}(K))$ is non-trivial.
\end{theorem}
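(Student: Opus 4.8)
The plan is to prove non-triviality by showing that the fiber of the fibered $3$-knot $\tau_{m_2}(\tau_{m_1}(K))$ has non-trivial fundamental group, which by itself is enough. Since $m_2\geq 1$, this knot is fibered with fiber $F=\Sigma\setminus B$, where $\Sigma$ denotes the $m_2$-fold cyclic branched cover of $S^4$ along $\tau_{m_1}(K)$ and $B$ is an open $4$-ball; removing $B$ does not change the fundamental group, so $\pi_1(F)\cong\pi_1(\Sigma)$. The knot complement is the mapping torus of the monodromy, hence $\pi_1(S^5\setminus\tau_{m_2}(\tau_{m_1}(K)))\cong\pi_1(F)\rtimes\mathbb{Z}$. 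If the knot were trivial, its complement would be homotopy equivalent to $S^1$ and this group would be $\mathbb{Z}$; but an extension of $\mathbb{Z}$ by a non-trivial group is never isomorphic to $\mathbb{Z}$. Therefore it suffices to prove $\pi_1(\Sigma)\neq 1$.

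The heart of the argument is the identification $\pi_1(\Sigma)\cong\pi_1(\Sigma_m(K))$, where $\Sigma_m(K)$ is the $m$-fold cyclic branched cover of $S^3$ along $K$ and $m=\gcd(m_1,m_2)$. To obtain it I would write $A=\pi_1(S^4\setminus\tau_{m_1}(K))=G_1\rtimes_{\phi_*}\mathbb{Z}$, where $G_1=\pi_1(\Sigma_{m_1}(K))$, the $\mathbb{Z}$-factor is generated by a meridian $t$ of $\tau_{m_1}(K)$, and $\phi$ is the periodic monodromy of order $m_1$ (the deck transformation of $\Sigma_{m_1}(K)$). The $m_2$-fold cyclic cover of the complement is the mapping torus of $\phi^{m_2}$, with fundamental group $G_1\rtimes\langle t^{m_2}\rangle$, and forming the branched cover kills the upstairs meridian, which is $t^{m_2}$. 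Thus $\pi_1(\Sigma)=G_1/\langle\!\langle\, g^{-1}\phi_*^{m_2}(g):g\in G_1\,\rangle\!\rangle$ is the group of coinvariants of $G_1$ under $\phi_*^{m_2}$. Since $\phi_*$ has order $m_1$, the cyclic subgroup $\langle\phi_*^{m_2}\rangle$ equals $\langle\phi_*^{m}\rangle$, so these coinvariants are taken with respect to the deck group of the intermediate cyclic branched cover $\Sigma_{m_1}(K)\to\Sigma_m(K)$. A meridian computation valid for \emph{any} cyclic branched cover $Y\to X$ with deck transformation $\sigma$ shows that the coinvariants $(\pi_1 Y)_\sigma$ are isomorphic to $\pi_1(X)$ — both equal $\pi_1(X\setminus L)$ modulo the normal closure of a meridian of the branch locus $L$. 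Applying this to $\Sigma_{m_1}(K)\to\Sigma_m(K)$ yields $\pi_1(\Sigma)\cong\pi_1(\Sigma_m(K))$.

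Finally I would deduce $\pi_1(\Sigma_m(K))\neq 1$ from the hypothesis on the center. The $m$-fold cyclic branched cover fits into the exact sequence $1\to\pi_1(\Sigma_m(K))\to\pi_1^{orb}(\mathcal{O}(K,m))\to\mathbb{Z}/m\to 1$, the subgroup being the kernel of the map to the deck group. If $\pi_1(\Sigma_m(K))$ were trivial, then $\pi_1^{orb}(\mathcal{O}(K,m))$ would be isomorphic to $\mathbb{Z}/m$; but for $m\geq 2$ the group $\mathbb{Z}/m$ is abelian with non-trivial center, contradicting the assumption that the center of $\pi_1^{orb}(\mathcal{O}(K,m))$ is trivial. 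Hence $\pi_1(\Sigma_m(K))\neq 1$, so $\pi_1(\Sigma)\neq 1$ and $\tau_{m_2}(\tau_{m_1}(K))$ is non-trivial.

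The reduction in the first paragraph and the center argument in the last are routine; the \textbf{main obstacle} is the identification $\pi_1(\Sigma)\cong\pi_1(\Sigma_m(K))$ of the second paragraph, which is the same structural input underlying Theorem~\ref{thm01} (there $m=1$ gives $\pi_1(\Sigma_m(K))=\pi_1(S^3)=1$, forcing triviality). Making it rigorous requires care with basepoints, with the claim that the upstairs branched-cover meridian is exactly $t^{m_2}$, and with verifying that $\Sigma_{m_1}(K)\to\Sigma_m(K)$ is genuinely a cyclic branched cover with deck group $\langle\phi_*^m\rangle$; the hypothesis that $K$ is non-trivial keeps us in the setting, as in \cite{FI23}, where $\pi_1^{orb}(\mathcal{O}(K,m))$ is the relevant group.
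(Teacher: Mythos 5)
Your argument is correct, but it takes a genuinely different route from the paper's. The paper never looks at the fiber: it applies the twist-spin presentation (Lemma~\ref{lemma21}) twice to obtain Lemma~\ref{lemma31} (the one geometric point being that the meridian of $\tau_{m_1}(K)$ can be taken equal to that of $K$, $\mu_2=\mu_1=x_1$), eliminates $h_1,h_2$ so that $G=\pi_1(S^5\setminus\tau_{m_2}(\tau_{m_1}(K)))$ becomes $\pi_1(S^3\setminus K)$ modulo relations making $x_1^{m_1}$ and $x_1^{m_2}$ --- hence $x_1^{m}$ --- central, and uses the center hypothesis to pin down $Z(G)=\langle x_1^m\rangle$, giving $G/Z(G)\cong\pi_1^{orb}(\mathcal O(K,m))$ (Lemma~\ref{lemma33}); non-triviality then follows because the central quotient of $\mathbb Z$ is trivial while $\pi_1^{orb}(\mathcal O(K,m))$ surjects onto $\mathbb Z/m\mathbb Z$. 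You instead detect non-triviality through the fiber subgroup, and your key identification $\pi_1(\Sigma)\cong\pi_1(\Sigma_m(K))$ checks out and is even visible inside the paper's algebra: in the presentation of Lemma~\ref{lemma33} the fiber subgroup is $\ker(G\to\mathbb Z)$, which meets $\langle x_1^m\rangle$ trivially and maps isomorphically onto $\ker\bigl(\pi_1^{orb}(\mathcal O(K,m))\to\mathbb Z/m\mathbb Z\bigr)\cong\pi_1(\Sigma_m(K))$, exactly your coinvariants computation $(G_1)_{\phi_*^{m_2}}=(G_1)_{\phi_*^{m}}$. As for what each approach buys: the paper's is purely presentation-theoretic, avoids the basepoint/monodromy bookkeeping you rightly flag as the delicate part, extends verbatim to iterated twist-spinnings (Remark~\ref{rem34}), and yields the finer structural statement $Z(G)\cong\mathbb Z$ and $G/Z(G)\cong\pi_1^{orb}(\mathcal O(K,m))$, which ties into the authors' program of distinguishing such knots by their groups. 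Your version uses the center hypothesis only in the weak form ``$\pi_1^{orb}(\mathcal O(K,m))\not\cong\mathbb Z/m\mathbb Z$'', and, once made rigorous, it would actually prove more: by the Smith conjecture together with the Poincar\'e conjecture, $\pi_1(\Sigma_m(K))\neq 1$ for every non-trivial $K$ and $m\geq 2$, so your route would remove the center hypothesis altogether and cover, in particular, the torus-knot cases where Theorem~\ref{lemmatorus} shows the paper's Lemma~\ref{lemma33} fails. One small aside: your parenthetical about Theorem~\ref{thm01} overstates the analogy --- for $m=1$ your identification gives only that the fiber is simply connected, whereas triviality of the knot there requires Pao's theorem that the branched cover is diffeomorphic to $S^4$, so that the fiber is a $4$-ball.
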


Note that if $K$ is a non-trivial and non-torus knot and the $m$-fold cyclic branched cover of $S^3$ along $K$ is aspherical, then  the center of $\pi_1^{orb}(\mathcal O(K,m))$ is trivial.
See the first half of the proof of~\cite[Lemma~3.1]{FI23}. 
For example, if $K$ is a hyperbolic knot with $m\geq 3$ or a prime satellite %or composite 
knot with $m\geq 2$, then the center of $\pi_1^{orb}(\mathcal O(K,m))$ is trivial.
We will prove in Theorem~\ref{lemmatorus} that  if $K$ is a $(p,q)$-torus knot and $m$ does not divide $pq$ then the center of $\pi_1^{orb}(\mathcal O(K,m))$ is non-trivial.

%The results in~\cite{FI23} can be used for distinguishing $\tau_{m_2}(\tau_{m_1}(K_1))$ and $\tau_{m_2}(\tau_{m_1}(K_2))$ if $K_1$ and $K_2$ are prime and non-torus knots in most cases.bSince discussing exceptional cases is a bit complicated, we will not discuss it in this paper and leave it for another occasion.

The authors would like to thank Makoto Sakuma for helpful suggestions.
The second author would like to thank the organizers of the International Conference "Singularities and Algebraic Geometry" Tuy Hoa 2024 for their kind hospitality and the organization of the excellent conference.
The second author is supported by JSPS KAKENHI Grant Numbers JP23K03098 and JP23H00081, and
JSPS-VAST Joint Research Program, Grant number JPJSBP120219602.

\section{$k$-twist-spinning and branched twist spin}

In this section, we introduce the $k$-twist-spinning of an $n$-dimensional knot in $S^{n+2}$
and a branched twist spin in $S^4$.
An $n$-dimensional knot (or $n$-knot for short) is the image of an embedding of $S^n$ into $S^{n+2}$.
%We regard $S^{n+2}$ as the boundary of the $n+3$-ball $D^{n+3}$.
Throughout this paper, $\text{\rm Int\,}X$ denotes the interior of a topological space $X$,
$\partial X$ denotes the boundary of $X$, and
$\text{\rm Nbd}(Y;X)$ denotes a compact tubular neighborhood of a topological space $Y$ embedded in $X$.

\subsection{$k$-twist-spinning}

Let $K^n$ be an $n$-knot. Choose a point $p$ in $K^n$ and a coordinate neighborhood $g:\R^{n+2}\to S^{n+2}$ such that $g(0)=p$ and  $g^{-1}(K^n)=\R^n\times \{(0,0)\}$. Let $B^{n+2}$ be the unit ball in $\R^{n+2}$
and set $X^{n+2}=g(B^{n+2})$ and $X^n=g(B^{n+2}\cap (\R^n\times\{(0,0)\}))$.
The pair $(X^{n+2}, X^n)$ is an unknotted $n$-ball in the $n+2$-ball.
Let $Y^{n+2}$ be the closure of $S^{n+2}-X^n$ and $Y^n$ be the closure of $K^n\setminus X^n$.
If $K^n$ is not trivial, then $Y^n$ is a knotted $n$-ball relatively embedded in the $n+2$-ball $Y^{n+2}$.
Gluing $(X^{n+2}, X^n)$ and $(Y^{n+2}, Y^n)$ by the identity map from $\partial X^{n+2}$ to $\partial Y^{n+2}$,
we recover the $n$-knot $K^n$ in $S^{n+2}$. See Figure~\ref{Fig1}.

\begin{figure}
\begin{center}
\includegraphics[scale=0.8, bb=127 546 564 713]{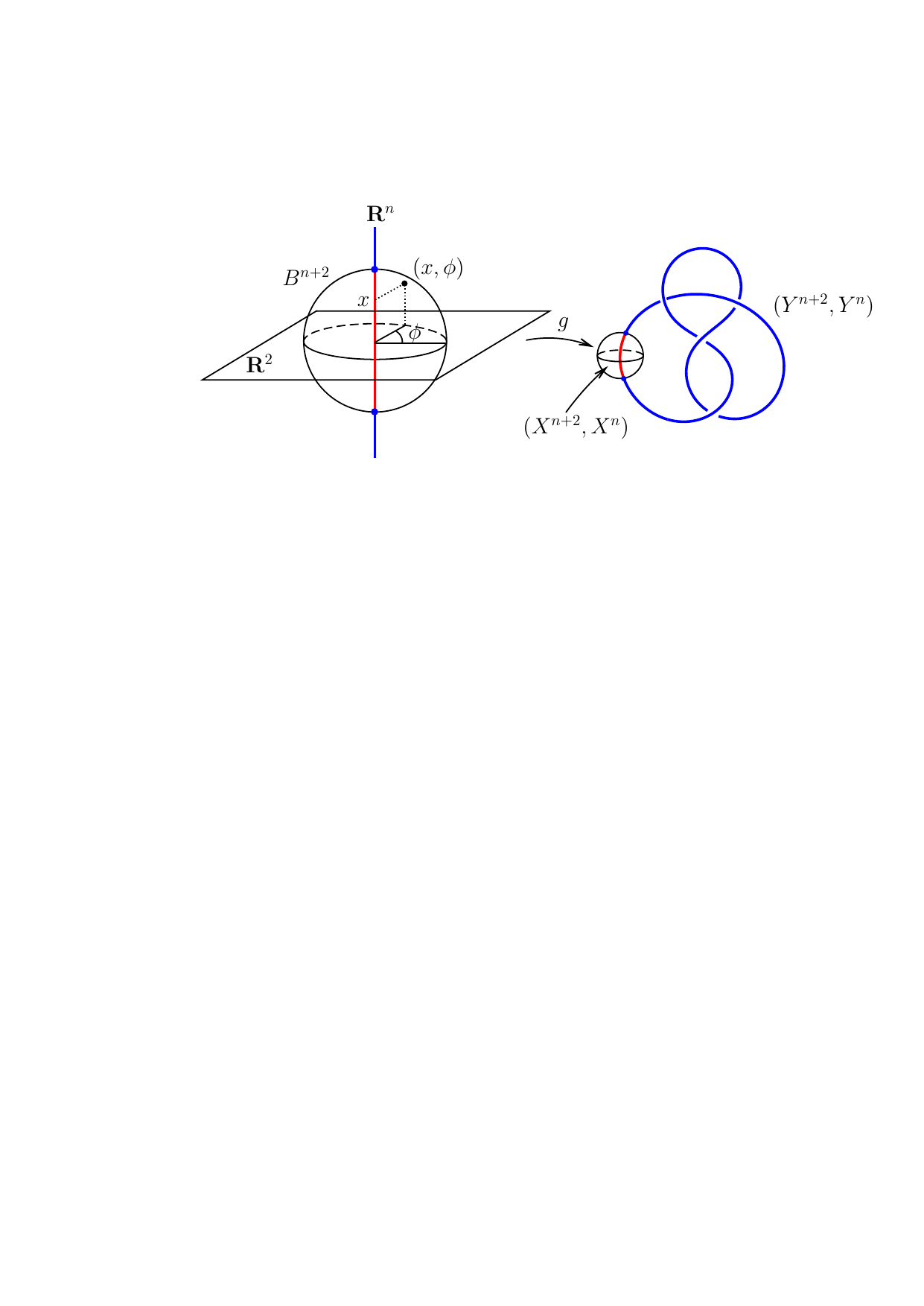}
\caption{A decomposition of $S^{n+2}, K^n)$ into $(X^{n+2}, X^n)$ and $(Y^{n+2}, Y^n)$,
and the coordinates $(x,\phi)$ on $X^{n+2}$.
The union $X^{n+2}\cup Y^{n+2}$ is $S^{n+2}$ and the union $X^n\cup Y^n$ is $K^n$.}\label{Fig1}
\end{center}
\end{figure}

Now set $X=(\partial X^{n+2}, \partial X^n)\times D^2$ and $Y=(Y^{n+2}, Y^n)\times \partial D^2$,
where $D^2$ is the unit disk in $\R^2$.
Let $x$ denote the coordinate of $g(\R^n\times \{(0,0)\})$ and $\phi$ denote the angular coordinate
of  $g(\{0\}\times (D^2\setminus\{(0,0)\}))$. Hence 
$X^{n+2}$ is equipped with the coordinates $(x,\phi)$,
%$(x,\phi)$ are coordinates of $X^{n+2}$,
where we think that the angle $\phi$ is not defined if $(x,\phi)\in g(\R^n\times\{(0,0)\})$.
Then we glue $X$ and $Y$ by the diffeomorphism $f:\partial X\to \partial Y$ defined by
\begin{equation}\label{eq21}
   ((x,\phi), \theta) \mapsto ((x, \phi+k\theta), \theta),
\end{equation}
where $(x,\phi)\in \partial X^{n+2}$, $\theta\in S^1=\partial D^2$ and $k\in\mathbb N\cup\{0\}$. 
The union $X\cup Y$ is the pair of the boundaries of 
$Y^{n+2}\times D^2$ and $Y^n \times D^2$, which is the pair of $S^{n+3}$ and an $n+1$-knot in $S^{n+3}$.
%The pair of manifolds obtained by this gluing map is the pair of $S^{n+3}$ and an $n+1$-dimensional sphere in $S^{n+3}$. 
This $n+1$-knot is called the {\it $k$-twist spun knot} of $K^n$.
We denote it as $\tau_k (K^n)$.
Note that the above definition of $\tau_k (K^n)$ works even if  $k$ is negative. In this paper, we always assume that $k\geq 0$ by reversing the orientation of $S^4$ if necessary.

%Let $\tau_k K^{n-1}$ be the $k$-twist spun knot, which is an $n$-knot in $S^{n+2}$.
For a $1$-knot $K^1$, a presentation of $\pi_1(S^4\setminus \tau_k(K^1))$ can be obtained easily, see for example~\cite{Rol90, Zee65}.
Applying the same argument in these references to $\pi_1(S^{n+3}\setminus \tau_k(K^n))$,
we obtain the following presentation.
 
\begin{lemma}\label{lemma21}
Let $\langle x_1,\ldots, x_u\mid r_1,\ldots, r_v\rangle$ be a presentation of $\pi_1(S^{n+2}\setminus K^n)$ with generators $x_1,\ldots, x_u$ and relators $r_1,\ldots, r_v$.
Then 
\[
\pi_1(S^{n+3}\setminus \tau_k(K^n)) \cong 
\langle x_1,\ldots, x_u,\,h\mid r_1,\ldots, r_v,\, x_1hx_1^{-1}h^{-1}, \ldots, x_uhx_u^{-1}h^{-1},\, \mu^k h\rangle,
\]
where $\mu$ is a meridian of $K^n$ in $S^{n+2}$.
In particular, 
$\pi_1(S^{n+3}\setminus \tau_1(K^n))\cong\mathbb Z$ and
$\pi_1(S^{n+3}\setminus \tau_0(K^n))\cong \pi_1(S^{n+2}\setminus K^n)$.
\end{lemma}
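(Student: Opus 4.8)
The plan is to compute $\pi_1(S^{n+3}\setminus\tau_k(K^n))$ by the Seifert--van Kampen theorem, using the decomposition induced by the splitting $S^{n+3}=X\cup_f Y$ from the construction. I would set $A$ to be the part of the complement lying in $X=(\partial X^{n+2},\partial X^n)\times D^2$, namely $A=(\partial X^{n+2}\setminus\partial X^n)\times D^2$, and $B$ to be the part lying in $Y=(Y^{n+2},Y^n)\times\partial D^2$, namely $B=(Y^{n+2}\setminus Y^n)\times\partial D^2$, so that the overlap is the gluing region with the knot removed, $A\cap B=(\partial X^{n+2}\setminus\partial X^n)\times\partial D^2$.

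First I would identify the homotopy types. Since $D^2$ is contractible, $A$ retracts onto $\partial X^{n+2}\setminus\partial X^n$, the complement of the unknotted codimension-two pair $(\partial X^{n+2},\partial X^n)=(S^{n+1},S^{n-1})$; hence $A\simeq S^1$ and $\pi_1(A)\cong\mathbb{Z}$, generated by the meridian. Because $(Y^{n+2},Y^n)$ is $(S^{n+2},K^n)$ with an unknotted ball pair removed, a Mayer--Vietoris/van Kampen argument (the inclusion $\partial X^{n+2}\setminus\partial X^n\hookrightarrow X^{n+2}\setminus X^n$ being a homotopy equivalence) shows $Y^{n+2}\setminus Y^n\simeq S^{n+2}\setminus K^n$, so $\pi_1(B)\cong\pi_1(S^{n+2}\setminus K^n)\times\mathbb{Z}$, the $\mathbb{Z}$-factor being generated by the $\partial D^2$-direction $h$; this direct-product structure already produces the commuting relators $x_ihx_i^{-1}h^{-1}$. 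Finally $A\cap B\simeq T^2$, so $\pi_1(A\cap B)\cong\mathbb{Z}^2$, generated by the meridian loop $b$ (the $\phi$-circle) and the $\partial D^2$-loop $a$ (the $\theta$-circle).

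The crux is to track these two generators under the inclusions, and this is where the twisting map $f$ of~\eqref{eq21} enters. On the $A$-side the disk $D^2$ caps off the $\theta$-circle, so $a\mapsto 1$ and $b\mapsto\mu$. On the $B$-side one must push the loops forward by $f\colon((x,\phi),\theta)\mapsto((x,\phi+k\theta),\theta)$: the loop $a$ is sent to $\theta\mapsto((x_0,\phi_0+k\theta),\theta)$, which winds $k$ times around the meridian in the $Y^{n+2}\setminus Y^n$ factor and once around $\partial D^2$, hence $a\mapsto\mu^k h$, while $b\mapsto\mu$ since the twist only shifts the $\phi$-circle by a constant and leaves its class unchanged. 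I expect the main obstacle to be exactly this bookkeeping: getting the exponent $k$ correct and confirming that the $\theta$-circle bounds on the $A$-side while contributing the factor $h$ on the $B$-side.

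Assembling these data in van Kampen, the generator of $\pi_1(A)$ is identified via $b$ with the meridian $\mu\in\pi_1(B)$, so $A$ contributes no new generator, while the relation coming from $a$ reads $1=\mu^k h$, i.e.\ the relator $\mu^k h$. This yields precisely the asserted presentation. The two special cases then follow by elementary group theory: for $k=0$ the relator becomes $h$, killing $h$ together with the commutators and returning $\pi_1(S^{n+2}\setminus K^n)$; for $k=1$ we obtain $h=\mu^{-1}$ central, and since a knot group is normally generated by its meridian, imposing centrality of $\mu$ collapses the group onto the cyclic subgroup generated by $\mu$, which is $\mathbb{Z}$ because the abelianization of a knot group is $\mathbb{Z}$.
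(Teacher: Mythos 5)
Your proposal is correct and follows essentially the same route as the paper: the authors use the identical decomposition into $M_1=(\partial X^{n+2}\setminus\partial X^n)\times D^2$ and $M_2=(Y^{n+2}\setminus Y^n)\times\partial D^2$, collapse $M_1$ to a solid torus so that only the $2$-cell $D=\{*\}\times D^2$ matters, and read off the relator $\mu^k h$ from the fact that $f$ sends $\partial D$ to a curve winding $k$ times in $\tilde\phi$ and once in $\tilde\theta$ --- exactly the van Kampen bookkeeping you carry out over the torus $A\cap B$. Your treatment of the special cases $k=0,1$ also matches theirs (they cite Kamada for the generators being conjugates of $\mu$, where you invoke normal generation by the meridian, which is the same point).
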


\begin{proof}
The complement $S^{n+3}\setminus \tau_k(K^n)$ is the union of $M_1=(\partial X^{n+2}\setminus \partial X^n)\times D^2$ and $M_2=(Y^{n+2}\setminus Y^n)\times \partial D^2$. Since $(X^{n+2}, X^n)$ is unknotted, 
$\partial X^{n+2}\setminus \partial X^n$ is homotopic to $S^1$. 
Note that this homotopy is given by the deformation retract from $\partial X^{n+2}\setminus \text{\rm Int\,Nbd}(\partial X^n; \partial X^{n+2})$ to the equator of the $n+1$-sphere $\partial X^{n+2}$ 
shown on the left in Figure~\ref{Fig2}.
Hence  $S^{n+3}\setminus \tau_k(K^n)$ is homotopic to a manifold obtained from $M_2$ by attaching $S^1\times D^2$.
%, where the first factor $S^1$ of $S^1\times D^2$ is attached to the equator of the sphere $S^{n+1}$ on the left in Figure~\ref{Fig2}.
In particular, we have
\[
   \pi_1(S^{n+3}\setminus \tau_k(K^n))\cong \pi_1(M_2\cup D),
\]
where $D=\{*\}\times D^2\subset S^1\times D^2$.

\begin{figure}[htbp]
\begin{center}
\includegraphics[scale=0.8, bb=129 578 558 713]{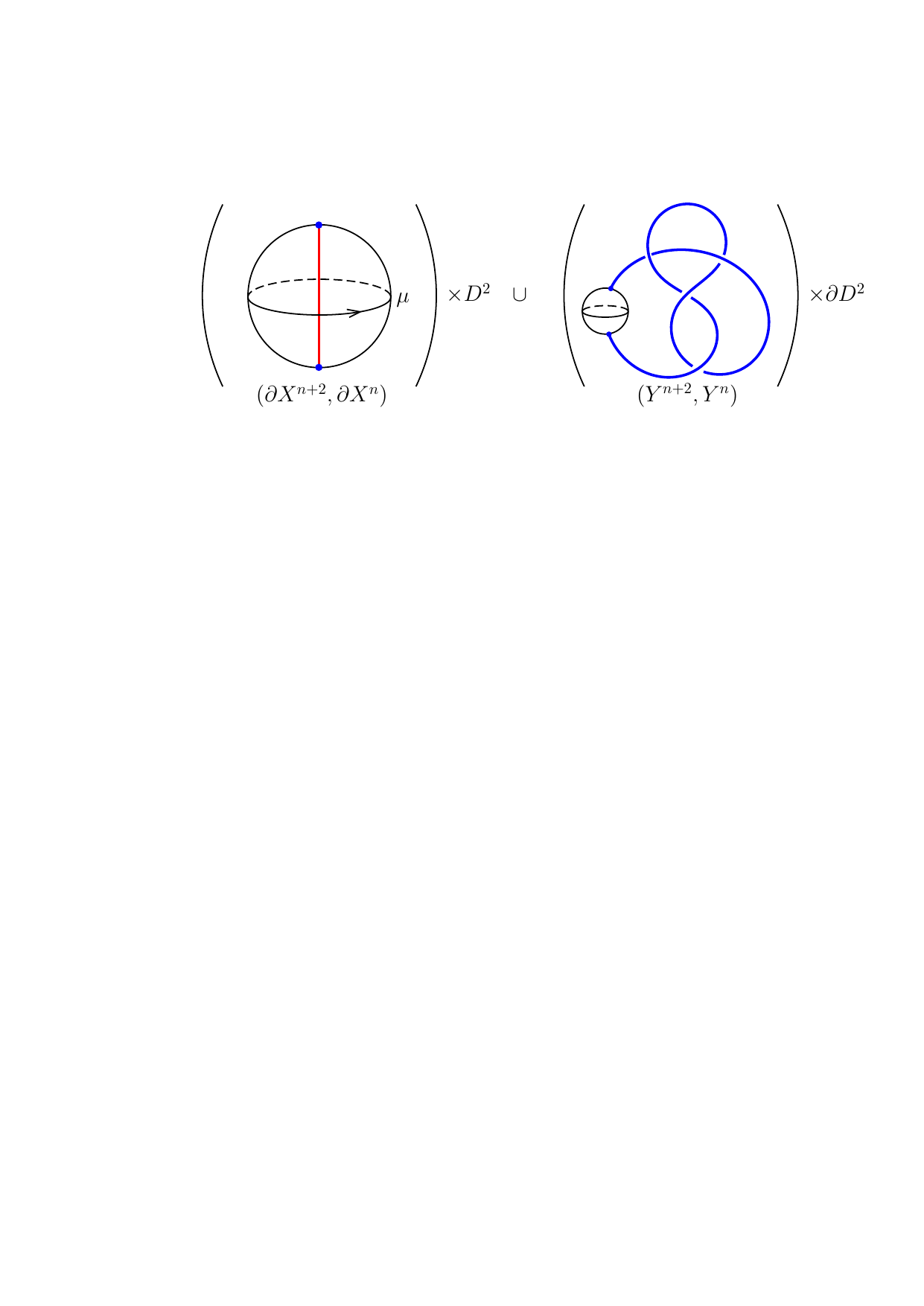}
\caption{A decomposition of $(S^{n+2}, K^n)$.
The union of $\partial X^{n+2}\times D^2$ and $Y^{n+2}\times \partial D^2$ is $S^{n+3}$ and the union of $\partial X^n\times D^2$ and $Y^n\times \partial D^2$ is $\tau_k(K^n)$.}\label{Fig2}
\end{center}
\end{figure}

The gluing map $f:\partial D\to \partial M_2$ of $D$ to $M_2$ is defined by~\eqref{eq21}.
Setting the coordinates on $\partial Y$ as $((\tilde x, \tilde\phi), \tilde\theta)$, we have
\[
   \tilde x=x,\quad \tilde \phi=\phi+k\theta,\quad \tilde \theta=\theta.
\]

Let $h$ be a loop given by $\{*\}\times \partial D^2\subset 
%X=(\partial X^{n+2}, \partial X^n)\times D^2$, 
(\partial X^{n+2}\setminus \partial X^n)\times D^2=M_1$, 
which coincides with the loop $\{*\}\times \partial D^2\subset 
%Y=(Y^{n+2}, Y^n)\times \partial D^2$.
(Y^{n+2}\setminus Y^n)\times \partial D^2=M_2$.
This loop $h$ commutes any elements in $M_1$ and $M_2$. Therefore, it commutes any elements in
$\pi_1(S^{n+3}\setminus \tau_k(K^n))$. 
This loop is parametrized by $\tilde\theta\in S^1$ in $\partial M_2$.

Let $\mu$ denote a loop in $\partial X^{n+2}\setminus\partial X^n$ given by the parameter $\phi$.
%If $k=0$ then $f$ is the identity map and hence we can recover $S^{n+2}$ as the union of $X^{n+2}$ and $Y^{n+2}$. This means that $\mu$ is the meridian of $S^{n+2}\setminus K^n$.
This loop is a meridian of $K^n$ in $S^{n+2}$, which can be seen from the figure on the left in Figure~\ref{Fig2}.
Since $\partial D$ is parametrized by $\theta\in S^1$, 
its image in $\partial M_2$ rotates $k$ times along $\tilde\phi\in S^1$ and once along $\tilde\theta\in S^1$.
Since $\partial D$ is null-homotopic in $M_2\cup D$, we have $\mu^k h=1$. 
Thus the presentation of $\pi_1(S^{n+3}\setminus \tau_k(K^n))$ in the assertion is obtained.

If $k=1$ then the meridian $\mu=h^{-1}$ is in the center of $\pi_1(S^{n+3}\setminus \tau_1(K^n))$.
We can choose the generators of $\pi_1(S^{n+3}\setminus \tau_1(K^n))$ to be conjugate to $\mu$ (cf.~\cite{Kam01}). 
Since $H_1(S^{n+3}\setminus \tau_1(K^n))$ is isomorphic to $\mathbb Z$,  $\pi_1(S^{n+3}\setminus \tau_1(K^n))$ is the infinite cyclic group generated by $\mu$.

The assertion in the case $k=0$ is obtained from the presentation of $\pi_1(S^{n+3}\setminus \tau_k(K^n))$ immediately.
\end{proof}

\begin{theorem}[Zeeman \cite{Zee65}]\label{thmzeeman}
If $k\geq 1$ then
there exists a locally trivial fibration 
\[
   f:S^{n+3}\setminus \text{\rm Int\,Nbd}(\tau_k(K^n); S^{n+3})\to S^1.
\]
%If $k\geq 1$ then 
The fiber is diffeomorphic to the $k$-fold cyclic branched cover of $S^{n+2}$ along $K^n$ with removing an open ball and the monodromy is periodic, shifting the sheets of the cyclic branched cover by $1$.
\end{theorem}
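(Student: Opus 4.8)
The plan is to construct the fibration by hand from the decomposition of the exterior coming from the gluing $X\cup_f Y$, and to read off both the fiber and the monodromy directly from the formula~\eqref{eq21} for $f$. Write $E=S^{n+3}\setminus\Int\Nbd(\tau_k(K^n);S^{n+3})$. Removing a tubular neighborhood of the knot from the two pieces $M_1$ and $M_2$ of the proof of Lemma~\ref{lemma21} decomposes $E$ as $M_1'\cup_f M_2'$, where $M_1'=(\partial X^{n+2}\setminus\Int\Nbd(\partial X^n;\partial X^{n+2}))\times D^2$ and $M_2'=W\times\partial D^2$ with $W=Y^{n+2}\setminus\Int\Nbd(Y^n;Y^{n+2})$. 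Since $(X^{n+2},X^n)$ is unknotted, the exterior of $\partial X^n=S^{n-1}$ in $\partial X^{n+2}=S^{n+1}$ is diffeomorphic to $D^n\times S^1$, the $S^1$-factor being the meridian circle parametrized by $\phi$; thus $M_1'\cong D^n\times S^1\times D^2$. The manifold $W$ is homotopy equivalent to the exterior of $K^n$ in $S^{n+2}$, and $\partial D^2=S^1$ carries the coordinate $\theta$ (with $\tilde\theta=\theta$ under $f$).

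First I would produce the meridian map. By Alexander duality $H^1(W;\mathbb{Z})\cong\mathbb{Z}$, so there is a smooth map $\lambda\colon W\to S^1$ representing a generator and restricting to the meridian angle $\phi$ on $\partial\Nbd(Y^n;Y^{n+2})$. I then define $\Phi\colon E\to S^1$ by setting $\Phi=\phi$ on $M_1'$, which is well defined everywhere including the core $\{0\}\subset D^2$, and $\Phi(w,\theta)=\lambda(w)-k\theta$ on $M_2'$. Using~\eqref{eq21}, on the gluing region one has $\lambda=\phi+k\theta$ and $\tilde\theta=\theta$, so the two formulas agree there and $\Phi$ is a well-defined smooth map.

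Next I would check that $\Phi$ is a submersion. On $M_1'$ this is clear since $d\phi\neq 0$ in the $S^1$-direction. On $M_2'$ the crucial point is that, although $\lambda$ is far from being a submersion (the exterior of a higher-dimensional knot rarely fibers over $S^1$), the extra term $-k\theta$ forces $d\Phi\neq 0$ because the $\partial D^2$-direction survives; the same reasoning applies on $\partial E$. Hence Ehresmann's fibration theorem for manifolds with boundary shows that $\Phi\colon E\to S^1$ is a locally trivial fibration. To identify the fiber $F=\Phi^{-1}(\theta_0)$, I compute it on each piece: $F\cap M_1'=\{\phi=\theta_0\}\cong D^n\times D^2=D^{n+2}$ is a ball, while over $M_2'$ the equation $\lambda(w)-k\theta=\theta_0$ has exactly $k$ solutions $\theta$ for each $w\in W$, so $F\cap M_2'\to W$ is the $k$-fold cyclic cover $W_k$ associated to $\lambda$. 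Capping off with the ball $F\cap M_1'$ fills in the branch locus, and one checks that $F\cup D^{n+2}$ is precisely the $k$-fold cyclic branched cover $\Sigma_k$ of $S^{n+2}$ along $K^n$; hence $F\cong\Sigma_k\setminus\Int D^{n+2}$, as claimed.

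Finally, the monodromy is read off directly: increasing $\theta_0$ by one full turn forces $\theta\mapsto\theta-1/k$ on $F\cap M_2'$, which is exactly the generating deck transformation of $W_k\to W$, that is, the shift of the sheets by $1$; it extends over the capping ball as a rotation by $2\pi/k$ in the normal $D^2$-direction, and is therefore periodic of order $k$. I expect the hard part to be the middle step: one must verify honestly that the twist gluing~\eqref{eq21} makes $\Phi$ a submersion on all of $E$, including the corners produced by the neighborhood removal, and that the $k$-to-$1$ solution set $\{\lambda-k\theta=\theta_0\}$ assembles into the cyclic branched cover rather than merely an abstract $k$-fold cover. In other words, the essential content is that the $k$-twist is exactly what converts the non-fibering meridian map $\lambda$ into a genuine fibration whose fiber is the branched cover; everything else is bookkeeping with the decomposition $E=M_1'\cup_f M_2'$.
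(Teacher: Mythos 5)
Your argument is essentially correct, but note that the paper contains no proof of this statement to compare against: it is quoted as Zeeman's theorem with a citation to \cite{Zee65}, and Zeeman's original argument is a piecewise-linear construction that unrolls the spun complement by an explicit isotopy. Your route is the standard smooth-category proof: decompose the exterior as $M_1'\cup_f M_2'$, extend the meridian class to $\lambda\colon W\to S^1$, observe that the twist term makes $\Phi=\lambda-k\theta$ a submersion (this is exactly where $k\geq 1$ enters, matching the hypothesis), and invoke Ehresmann. What this buys over citing Zeeman is that the fibration, the fiber, and the monodromy all drop out of one computation, and it makes transparent the structural fact exploited in the proof of Theorem~\ref{thm01} via Pao \cite{Pa78}, namely that the fiber is the punctured $k$-fold cyclic branched cover and the monodromy is the $1$-shift of its sheets; indeed your construction is precisely the open-book picture underlying Theorem~\ref{thmpao}. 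Your computation of the $k$-fold cover $\{(w,\theta):\lambda(w)-k\theta=\theta_0\}$ as the pullback of $z\mapsto z^k$ along $\lambda$, and of the monodromy as $\theta\mapsto\theta-1/k$ extending over the cap as a rotation of order $k$, are both sound.

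One small repair: in your gluing check you use $\lambda=\tilde\phi$ on the spherical portion $A'=\partial X^{n+2}\setminus\Int\Nbd(\partial X^n;\partial X^{n+2})$ of $\partial W$, but you only normalized $\lambda$ to be the meridian angle on the tube $\partial\Nbd(Y^n;Y^{n+2})$. You should prescribe $\lambda$ to be the angular coordinate on all of $\partial W$; the two angle functions agree along the corner, and the prescribed boundary map extends over $W$ because its class in $H^1(\partial W)$ is the restriction of the generator of $H^1(W)\cong\mathbb Z$ given by Alexander duality, so the same obstruction-theoretic argument you invoked delivers this stronger normalization. Relatedly, when identifying the fiber it is worth saying explicitly that the cap $F\cap M_1'\cong D^n\times D^2$ is attached to $W_k$ along the connected preimage $D^n\times S^1$ of the region $A'$ (a single circle wrapping $k$ times over the meridian direction), and that comparing with $\Sigma_k=W_k\cup(K^n\times D^2)$ shows the missing ball is a neighborhood of a point \emph{on the branch locus}; this pins down exactly what ``with removing an open ball'' means in the statement. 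These are presentational gaps, not mathematical ones.
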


\begin{remark}\label{rem23}
If $k=1$, then the fiber is diffeomorphic to the $n+1$-dimensional unit ball.
Therefore, $\tau_k(K^n)$ is a trivial $n+1$-knot in $S^{n+3}$.
\end{remark}

\subsection{Branched twist spin}

Circle actions on homotopy $4$-spheres are classified by Montgomery and Yang~\cite{MY60} and Fintushel~\cite{Fin76}, and later, Pao proved in~\cite{Pa78} that such homotopy $4$-spheres are the standard $4$-sphere $S^4$. 
A $2$-sphere embedded in $S^4$ that is invariant under a circle action is called a {\it branched twist spin} (cf.~\cite[\S 16.3]{Hil02}). 
A branched twist spin is obtained from a knot $K$ in $S^3$ by ``twisting'', like a $k$-twist-spinning, 
but the ``twisting'' for a branched twist spin is defined by two integers $m$ and $n$,
where $(m,n)$ is either a pair of coprime integers in $\mathbb Z\times \mathbb N$ or $(0,1)$.
We may assume that $m\geq 0$ by reversing the orientation of $S^4$ if necessary.
In this paper, since we study branched twist spins without fixing the orientation of $S^4$,
we always assume that $m\geq 0$.
If $(m,n)=(k,1)$ then it is nothing but the $k$-twist spun knot of $K$.

%Note that a circle embedded in $S^3$ that is invariant under a circle action is called a torus knot. A twist spun knot is an example of a branched twist spin. See Section~2.1 for more precise explanation. In this sense, branched twist spins are the simplest class of $2$-knots in $S^4$ if one wants to understand $2$-knots geometrically. The orbit space of a circle action on $S^4$ is the $3$-sphere $S^3$ except a trivial case and  the image of the union of singular orbits by the orbit map constitutes an embedded circle, that is a $1$-knot, in $S^3$. Conversely, we may recover a branched twist spin from a $1$-knot in $S^3$ with a pair of coprime integers $m$ and $n$ with $n>0$ uniquely. For a given knot $K$ in $S^3$ and these integers $m$ and $n$, we denote the corresponding branched twist spin by $K^{m,n}$.

In~\cite{Pa78}, Pao constructed a fibered $2$-knot by setting the $m$-fold cyclic branched cover of $S^3$ along $K$ with removing an open ball to be a fiber and the monodromy to be periodic, shifting the sheets of the cyclic branched cover by $n$, and then prove that the total space is the complement of a $2$-knot in $S^4$.
His results are summarized as follows:

\begin{theorem}[Pao \cite{Pa78}]\label{thmpao}
Let $\tau_{m,n}(K)$ be the branched twist spin of $K$ defined by 
a pair of coprime positive integers
%the integers 
$m$ and $n$,
where $K$ is a knot in $S^3$.
% and $(m,n)$ is either a pair of coprime positive integers or $(0,1)$.
%\textcolor{red}{If $m\geq 1$ then}
Then, there exists a locally trivial fibration
\[
   f:S^4\setminus \text{\rm Int\,Nbd}(\tau_{m,n}(K);S^4)\to S^1.
\]
%If $m\geq 1$, then 
The fiber is diffeomorphic to the $m$-fold cyclic branched cover of $S^3$ along $K$ with removing an open ball and the monodromy is periodic, shifting the sheets of the cyclic branched cover by $n$.
\end{theorem}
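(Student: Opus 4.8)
The plan is to realize the complement as the mapping torus of a periodic diffeomorphism and then to recognize the resulting closed $4$-manifold as the standard $S^4$ by means of the classification of circle actions quoted above. Let $\Sigma_m$ be the $m$-fold cyclic branched cover of $S^3$ along $K$, let $\tilde K\subset\Sigma_m$ be the branch locus (the preimage of $K$), and let $\rho\colon\Sigma_m\to\Sigma_m$ be the deck transformation that shifts the sheets by $1$; thus $\rho$ generates the deck group $\Integer/m\Integer$ and $\text{Fix}(\rho)=\tilde K$. First I would remove an open ball $B$ centered at a point of $\tilde K$ to form the proposed fiber $W=\Sigma_m\setminus\Int B$, and set $h=\rho^n$. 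Since $\gcd(m,n)=1$, the map $h$ also generates the deck group, so it is again periodic of order $m$ and shifts the sheets by $n$; these are exactly the fiber and the monodromy named in the statement.

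Next I would form the mapping torus $T_h=(W\times[0,1])/(x,1)\sim(h(x),0)$, which comes equipped with the fibration $T_h\to S^1$ of the assertion. Because $h^m=\id$, the $4$-manifold $T_h$ carries a natural circle action: writing $T_h=(W\times\R)/\Integer$, where $\Integer$ acts by $(x,s)\mapsto(h(x),s-1)$, the translations $(x,s)\mapsto(x,s+u)$ descend to an action of $S^1=\R/m\Integer$. Along $\tilde K\cap W$, where $h$ acts trivially, this action has isotropy $\Integer/m\Integer$; away from it the action is free. I would then cap off the boundary: with the ball removed at a branch point, the monodromy restricted to $\partial W=S^2$ is the rotation by $2\pi n/m$ about the two points $\partial W\cap\tilde K$, which has order $m$, so gluing back a tubular neighborhood of a $2$-sphere (the total space of a $D^2$-bundle over $S^2$), with the circle acting by rotation on the $D^2$-fibers, extends the action over a closed $4$-manifold $M$ and produces an invariant $2$-sphere $\tau_{m,n}(K)$ whose complement is $T_h$.

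Finally I would identify $M$ with $S^4$. A van Kampen and Mayer--Vietoris computation from the mapping-torus description shows that $M$ is simply connected with the homology of $S^4$, that is, a homotopy $4$-sphere; here the coprimality $\gcd(m,n)=1$ is what guarantees that the cap closes up consistently and kills the relevant fundamental group, in the same spirit as the relation $\mu^k h=1$ in the presentation of Lemma~\ref{lemma21}. Since $M$ admits an effective circle action, the results of Montgomery--Yang, Fintushel and Pao quoted above force $M$ to be diffeomorphic to the standard $S^4$. Then $\tau_{m,n}(K)\subset S^4$ is the branched twist spin, and by construction its complement $T_h$ fibers over $S^1$ with fiber $W$, the punctured $m$-fold cyclic branched cover, and monodromy $h$, the periodic map shifting the sheets by $n$, as required. \emph{The main obstacle} is this last identification: producing the circle action globally with precisely the $(m,n)$-singular data, and then invoking the deep classification to upgrade ``homotopy $S^4$'' to ``standard $S^4$''. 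The attendant bookkeeping---checking that the rotation numbers on $\partial W$ match the $(m,n)$-twisting so that the cap is consistent and the fiber is exactly the full branched cover---is the delicate part, and it is where the hypothesis $\gcd(m,n)=1$ is genuinely used.
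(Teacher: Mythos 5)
The paper offers no proof of this statement at all---it is quoted from Pao \cite{Pa78}, and the surrounding text only summarizes his construction---so your sketch must be judged against Pao's actual route, whose headlines (mapping torus of $h=\rho^n$, capping off, recognition via the circle-action classification of Montgomery--Yang \cite{MY60}, Fintushel \cite{Fin76} and Pao) your outline does mirror. The genuine gap is the equivariant capping step, and it is concrete. On $\partial T_h\cong S^2\times S^1$ your translation action of $S^1=\R/m\Integer$ is \emph{not} free when $m\geq 2$: the two points of $\partial W\cap\tilde K$ are fixed by $h$, so the orbits through them have isotropy $\Integer/m\Integer$, while the remaining boundary orbits are free and wind $m$ times around the base circle. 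The cap you propose, $S^2\times D^2$ with the circle rotating the $D^2$-fibers, restricts on $S^2\times \partial D^2$ to a \emph{free} action (every orbit is a fiber $\{x\}\times S^1$). An equivariant diffeomorphism preserves isotropy groups, so no equivariant gluing of these two boundaries exists; the circle action therefore does not extend over $M$ as you describe, and the final appeal to the classification has nothing to attach to. A symptom of the same error: your cap fixes the core sphere pointwise, so the invariant sphere you produce would lie in the fixed-point set, and a smooth circle action on $S^4$ whose fixed set contains a $2$-sphere has \emph{unknotted} fixed sphere---your construction could only ever yield trivial knots, contradicting the non-triviality results recalled after Theorem~\ref{thm01}. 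In the genuine Fintushel--Pao picture, $\tau_{m,n}(K)$ is invariant but not fixed: it is the union of an annulus of $\Integer/m\Integer$-orbits, an annulus of $\Integer/n\Integer$-orbits, and exactly two fixed points where the isotropy changes. The $\Integer/n\Integer$-isotropy data, which never appears anywhere in your sketch, is precisely what the cap must carry, and matching it equivariantly across $S^2\times S^1$ is where $\gcd(m,n)=1$ does its real work---not the vague ``consistency bookkeeping'' you gesture at.

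Two further soft spots. First, the assertion that van Kampen and Mayer--Vietoris show $M$ is a homotopy $4$-sphere is unproved: killing the boundary section in $\pi_1(T_h)\cong\pi_1(\Sigma_m\setminus \Int B)\rtimes\Integer$ leaves a quotient of $\pi_1(\Sigma_m)$ by relations of the form $g^{-1}h_*(g)$, and the triviality of this quotient for $\gcd(m,n)=1$ is essentially the content of Pao's theorem, not a routine computation; moreover, even granting a homotopy $4$-sphere, without an actual circle action on $M$ you cannot upgrade to the standard $S^4$, since the smooth $4$-dimensional Poincar\'e conjecture is open. Second, the filling $S^2\times D^2$ is ambiguous up to a Gluck twist \cite{Glu62}---the paper itself flags exactly this issue in its proof of Theorem~\ref{thm01}---and your homological bookkeeping is insensitive to the choice, so identifying the closed-up manifold and its invariant sphere still requires Pao's analysis that either filling yields $S^4$ containing a branched twist spin.
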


It is natural to ask when two branched twist spins are non-equivalent.
This problem is studied by the first author in~\cite{F17, F17a, F22}. See also~\cite{FI23, Hil23} for more recent results.

\section{Proofs of the theorems}

We first prove Theorem~\ref{thm01}.

\begin{proof}[Proof of Theorem~\ref{thm01}]
We prove that if either $K$ is a trivial knot or $m=1$ then $\tau_{m_2}(\tau_{m_1}(K))$ is a trivial $3$-knot in $S^5$. It is known that if $K^n$ in $S^{n+2}$ is trivial then $\tau_k(K^n)$ in $S^{n+3}$ is also trivial for any $k\geq 0$~\cite[Corollary~3]{Zee65}. Therefore, if $K$ is trivial then $\tau_{m_2}(\tau_{m_1}(K))$ is also.

Assume that $m=1$.
The fiber of  $\tau_{m_2}(\tau_{m_1}(K))$ is the $m_2$-fold cyclic branched cover $M_{m_2}(\tau_{m_1}(K))$ of $S^4$ along $\tau_{m_1}(K)$ with removing one open $4$-ball. We will prove that this is diffeomorphic to the unit $4$-ball.

As mentioned in Theorem~\ref{thmzeeman}, there exists a locally trivial fibration
\[
   f:S^4\setminus \text{\rm Int\,Nbd}(\tau_{m_1}(K);S^4)\to S^1
\]
whose fiber $F$ is the $m_1$-fold cyclic branched cover $M_{m_1}(K)$ of $S^3$ along $K$ with removing one open $3$-ball and its monodromy is the $1$-shift of the sheets of the cyclic branched cover.
The manifold $M_{m_2}(\tau_{m_1}(K))$ is obtained from the exterior $S^4\setminus \text{\rm Int\,Nbd}(\tau_{m_1}(K);S^4)$ by cutting it by a fiber $F$, making $m_2$ copies, gluing them in order, and filling the boundary by $S^2\times D^2$ canonically. 
The manifold before the final filling is the manifold whose fiber is $F$ and monodromy is the $m_2$-shift of the sheets of the cyclic branched cover. This is the exterior of a branched twist spin. 
Remark that the $(m_1,m_2)$-branched twist spin is defined for positive integers $m_1$ and $m_2$ satisfying $\gcd(m_1, m_2)=1$ or $(m_1,m_2)=(0,1)$. Hence the assumption $m=\gcd(m_1,m_2)=1$ is needed.
By filling the boundary of the manifold by $S^2\times D^2$, we obtain the manifold $M_{m_2}(\tau_{m_1}(K))$.
There are two way of filling $S^2\times D^2$. The difference is the so-called {\it Gluck twist}  (\cite{Glu62},  cf.~\cite{F18}).
In either case, this is nothing but the construction of a branched twist spin. 
In particular, by~\cite[Theorem 4]{Pa78},  $M_{m_2}(\tau_{m_1}(K))$ is diffeomorphic to $S^4$.
Since the fiber of  $\tau_{m_2}(\tau_{m_1}(K))$ is obtained from $M_{m_2}(\tau_{m_1}(K))$ by removing one open ball, it is diffeomorphic to the unit $4$-ball. Thus the $3$-knot $\tau_{m_2}(\tau_{m_1}(K))$, which is the boundary of the $4$-ball, is a trivial $3$-knot in $S^5$.
\end{proof}

%Next, we introduce a presentation of the fundamental group of $K^{m,n}$ given in~\cite{Plo84} (see also~\cite{F17}). Let $\langle x_1, \ldots, x_l \mid r_1, \ldots r_l\rangle$ be a Wirtinger presentation of a $1$-knot $K$ in $S^3$. The union of free orbits in $S^4\setminus K^{m,n}$ is a dense subset of $S^4\setminus K^{m,n}$. Let $h$ denote the element of $\pi_1(S^4\setminus K^{m,n})$ corresponding to a free orbit. Then $\pi_1(S^4 \setminus K^{m,n})$ is presented as
%\begin{equation}\label{knotgroup}
%\pi_1(S^4 \setminus K^{m,n}) \cong \langle x_1, \ldots, x_l, h \mid r_1, \ldots r_l,  x_1hx^{-1}_1h^{-1}, \ldots, x_lhx^{-1}_lh^{-1}, x_1^mh^{\beta}\rangle,
%\end{equation}
%where $\beta$ is an integer satisfying $n\beta \equiv 1\ ({\rm mod}\ m)$.

%cf. \cite[Lemma 3.1]{FI23}

Next, we show some lemmas for proving Theorem~\ref{thm02}.
Let $K$ be a knot in $S^3$ and 
$\langle x_1,\ldots, x_u\mid r_1,\ldots, r_v\rangle$ be a presentation of $\pi_1(S^3\setminus K)$ with generators $x_1,\ldots, x_u$ and relators $r_1,\ldots, r_v$. We may set the generators to be meridians.

\begin{lemma}\label{lemma31}
Let $\tau_{m_2}(\tau_{m_1}(K)))$ be the $m_2$-twist spun knot of the $m_1$-twist spun knot of a knot $K$ in $S^3$. Then 
\begin{equation}\label{eq31}
\begin{split}
&\pi_1(S^5\setminus \tau_{m_2}(\tau_{m_1}(K))) \\
& \quad \cong \langle x_1,\ldots, x_u,\,h_1, h_2\mid r_1,\ldots, r_v,\, x_1h_1x_1^{-1}h_1^{-1}, \ldots, x_uh_1x_u^{-1}h_1^{-1},\, \mu_1^{m_1} h_1, \\ 
& \hspace{41mm} \; x_1h_2x_1^{-1}h_2^{-1}, \ldots, x_uh_2x_u^{-1}h_2^{-1},\, h_1h_2h_1^{-1}h_2^{-1},\, \mu_2^{m_2} h_2
\rangle,
\end{split}
\end{equation}
where $\mu_1$ and $\mu_2$ are the meridians of $K$ and $\tau_{m_1}(K)$, respectively, and we can set them as $\mu_1=\mu_2=x_1$.
\end{lemma}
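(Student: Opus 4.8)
The plan is to read off the presentation \eqref{eq31} by applying Lemma~\ref{lemma21} twice, once for each of the two twist-spinnings, and the only genuine work is to keep track of how the meridians are inherited from one stage to the next.

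First I would apply Lemma~\ref{lemma21} to the $1$-knot $K$ in $S^3$, taking $n=1$ and $k=m_1$. Starting from the given presentation $\langle x_1,\ldots,x_u\mid r_1,\ldots,r_v\rangle$ of $\pi_1(S^3\setminus K)$ with meridional generators, this yields
\[
\pi_1(S^4\setminus \tau_{m_1}(K))\cong \langle x_1,\ldots,x_u,h_1\mid r_1,\ldots,r_v,\,x_ih_1x_i^{-1}h_1^{-1}\ (1\le i\le u),\,\mu_1^{m_1}h_1\rangle,
\]
where $\mu_1$ is a meridian of $K$ and we take $\mu_1=x_1$. The generator $h_1$ is the central loop supplied by the construction, commuting with every other generator. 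Next I would apply Lemma~\ref{lemma21} a second time, now to the $2$-knot $\tau_{m_1}(K)$ in $S^4$, taking $n=2$ and $k=m_2$ and feeding in the presentation just obtained as the input presentation of $\pi_1(S^4\setminus\tau_{m_1}(K))$. The lemma introduces a single new generator $h_2$ which, by construction, must commute with \emph{every} generator of the input group; since those generators are $x_1,\ldots,x_u,h_1$, this produces exactly the commutation relators $x_ih_2x_i^{-1}h_2^{-1}$ for $1\le i\le u$ together with the extra relator $h_1h_2h_1^{-1}h_2^{-1}$. The one remaining new relator is $\mu_2^{m_2}h_2$, where $\mu_2$ is a meridian of $\tau_{m_1}(K)$. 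Carrying over all the input relators and appending these, one obtains precisely \eqref{eq31}.

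The step I expect to be the main obstacle is the identification $\mu_2=x_1$, that is, showing that a meridian of $\tau_{m_1}(K)$ may be chosen among the meridians of $K$. I would argue this directly from the decomposition used to build $\tau_{m_1}(K)$: writing $S^4=(\partial X^3\times D^2)\cup(Y^3\times\partial D^2)$ and $\tau_{m_1}(K)=(\partial X^1\times D^2)\cup(Y^1\times\partial D^2)$, I would pick a point $q=(y,\theta_0)$ of $\tau_{m_1}(K)$ lying in $Y^1\times\partial D^2$. Along $\partial D^2$ the knot fills the circle direction, so the normal $2$-disk to $\tau_{m_1}(K)$ at $q$ sits inside the slice $Y^3\times\{\theta_0\}$ and coincides there with the normal disk of the arc $Y^1$ in $Y^3$. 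Its boundary, a meridian of $\tau_{m_1}(K)$, is therefore a meridian of $Y^1$ in $Y^3$, hence a meridian of $K$ in the corresponding copy of $S^3$, which is homotopic in the complement to $x_1$. Thus $\mu_2$ may be taken equal to $\mu_1=x_1$. Once this identification is in place, the two applications of Lemma~\ref{lemma21} assemble directly into \eqref{eq31} with no further computation required.
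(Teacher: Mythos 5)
Your proposal is correct, and its skeleton (two successive applications of Lemma~\ref{lemma21}, with the commutation relator $h_1h_2h_1^{-1}h_2^{-1}$ arising because $h_2$ must commute with the generator $h_1$ of the input presentation) is exactly the paper's. Where you diverge is in the only substantive step, the identification $\mu_2=x_1$. The paper proves this by placing the ball removed for the second spinning explicitly inside the first piece of the decomposition of $S^4$, taking $B^4=H\times D^2$ with $H$ the upper hemisphere of $\partial X^3$, and then tracing the meridian of $\tau_{m_2}(\tau_{m_1}(K))$ on $\partial B^4\setminus\partial B^2$ through the decomposition of $S^5$ until it is recognized as the loop $\mu_1$ on $H\setminus\partial X^1$ encircling the north pole; this matches the specific meridian that appears in the proof of Lemma~\ref{lemma21}, namely the $\phi$-parameter loop on the boundary of the removed ball. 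You instead work in the second piece $Y^3\times\partial D^2$, pick a point $q\in Y^1\times\partial D^2$, and observe that the normal $2$-disk to $\tau_{m_1}(K)$ at $q$ lies in the slice $Y^3\times\{\theta_0\}$ and is a normal disk of $Y^1$ in $Y^3$, so its boundary is a meridian of $K$. This is valid and arguably slicker, but note one point you leave implicit: your meridian is not the specific one produced in the proof of Lemma~\ref{lemma21} (which sits on the boundary of the removed ball), only freely homotopic, hence conjugate, to it. The substitution is harmless because meridians of a $2$-knot are well-defined up to conjugation and the relator $\mu_2^{m_2}h_2$ is insensitive to replacing $\mu_2$ by a conjugate, since $h_2$ is central by the other relators, so $(g\mu_2 g^{-1})^{m_2}h_2=g\left(\mu_2^{m_2}h_2\right)g^{-1}$; this one-line justification should be added to make the argument complete. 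With that, each approach has its merit: the paper's choice of $B^4$ pins down the meridian on the nose with no conjugacy bookkeeping, while yours avoids describing the decomposition of $S^5$ altogether.
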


\begin{proof}
By Lemma~\ref{lemma21},
\[
\begin{split}
\pi_1(S^4\setminus \tau_{m_1}(K)) \cong 
\langle x_1,\ldots, x_u,\,h_1 \mid & \;r_1,\ldots, r_v,\, x_1h_1x_1^{-1}h_1^{-1}, \ldots, x_uh_1x_u^{-1}h_1^{-1},\, \mu_1^{m_1} h_1 \rangle.
\end{split}
\]
Then, the presentation in the assertion is obtained by applying Lemma~\ref{lemma21} to this presentation again.

The decomposition of $S^4$ used to define the $k$-twist-spinning is as in Figure~\ref{Fig3}. We now describe the decomposition of $S^5$ to read off the meridian $\mu_2$ of $\tau_{m_2}(\tau_{m_1}(K))$.
The first factor $(Y^4,Y^2)$ of the second piece of the decomposition of $S^5$ is obtained 
from $S^4$ by removing the interior of a $4$-dimensional ball $B^4$ intersecting $\tau_{m_1}(K)$ trivially.
We set this $B^4$ to be the product of the upper hemisphere $H$ of $\partial X^3$
and the second factor $D^2$ of $(\partial X^3,\partial X^1)\times D^2$.
Set $B^2$ to be the intersection of $B^4$ with $\tau_{m_1}(K)$. Note that $B^2=(H\cap \partial X^1)\times D^2$, where $H\cap \partial X^1$ is the north pole of the sphere on the left in Figure~\ref{Fig3}.
The first piece $(\partial X^4,\partial X^2)\times (D^2)'$ of the decomposition of $S^5$ is $(\partial B^4,\partial B^2)\times (D^2)'$, where we used the notation $(D^2)'$ instead of $D^2$ since it is different from $D^2$ in Figure~\ref{Fig3}. The meridian $\mu_2$ of  $\tau_{m_2}(\tau_{m_1}(K))$
is a loop on $(\partial B^4\setminus \partial B^2)\times \partial (D^2)'$ that goes around $\partial B^2\times \partial (D^2)'$ once and does not go around $\partial (D^2)'$. Hence it is a loop on $H\setminus \partial X^1$ that goes around $H\cap \partial X^1$ once.
This is nothing but the loop $\mu_1$ in Figure~\ref{Fig3}. As explained 
%in the last paragraph 
in the proof of Lemma~\ref{lemma21}, $\mu_1$ is a meridian of $K$.
Hence we have $\mu_2=\mu_1=x_1$.
\end{proof}

\begin{figure}[htbp]
\begin{center}
\includegraphics[scale=0.8, bb=129 578 558 713]{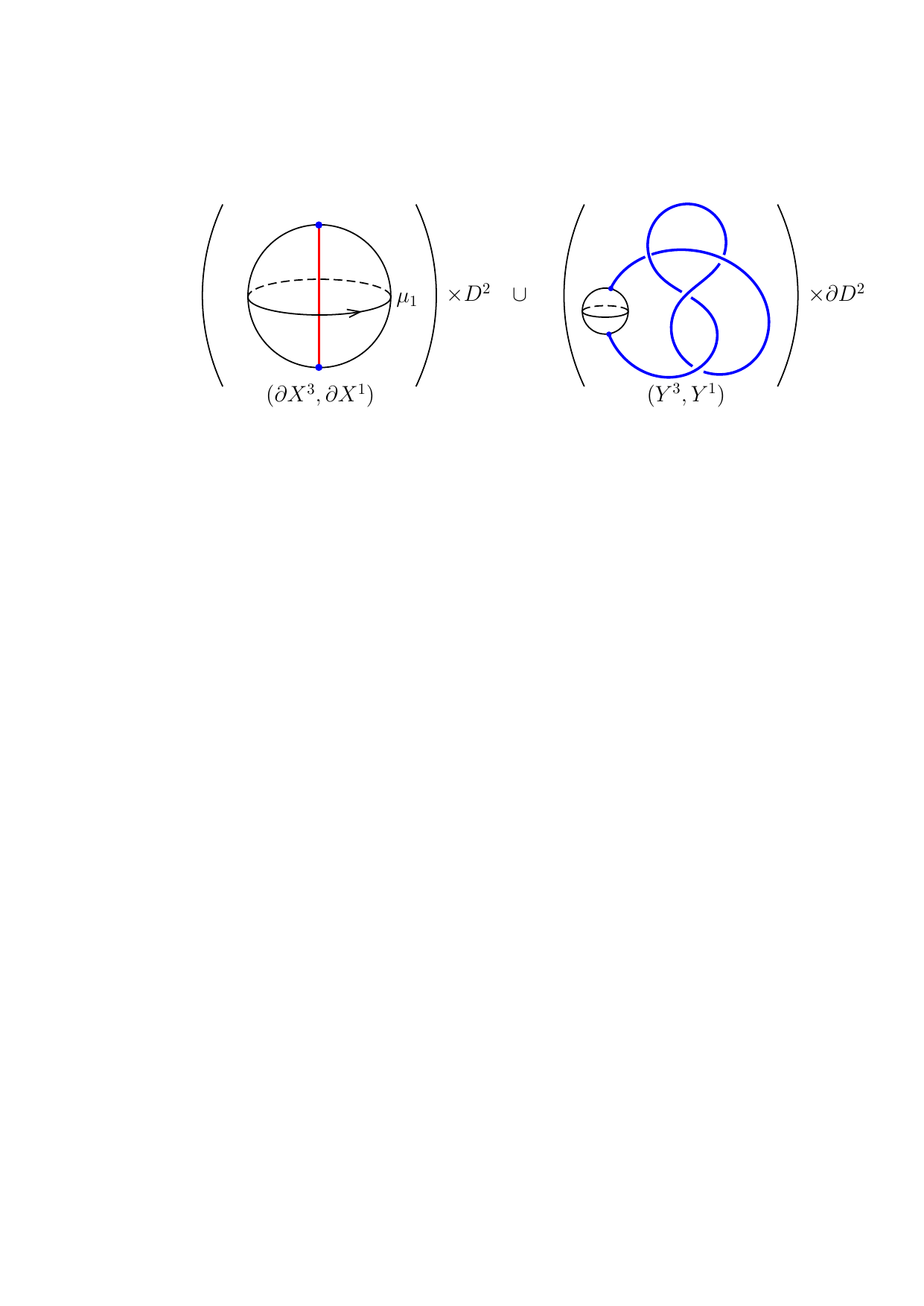}
\caption{A decomposition of $(S^4, \tau_{m_1}(K))$.}\label{Fig3}
\end{center}
\end{figure}

For a presentation $\langle x_1,\ldots, x_u\mid r_1,\ldots, r_v\rangle$ of $\pi_1(S^3\setminus K)$ of a knot $K$ in $S^3$, the fundamental group $\pi_1^{orb}(\mathcal O(K,m))$ of the orbifold $\mathcal O(K,m)$ with underlying space $S^3$ and remification locus $K$ is presented as
\begin{equation}\label{eq00}
   \pi_1^{orb}(\mathcal O(K,m))\cong\langle x_1,\ldots, x_u\mid r_1,\ldots, r_v,\, \mu^m\rangle
\end{equation}
where $\mu$ is a meridian of $K$. Using a Wirtinger presentation $\langle x_1,\ldots, x_u\mid r_1,\ldots, r_v\rangle$ of $\pi_1(S^3\setminus K)$, we may set $\mu=x_1$. 
See for instance~\cite{Thu77, DM84, Tak88, BP01} for basics of such orbifolds. 

\begin{lemma}\label{lemma33}
Suppose that $m=\gcd(m_1,m_2)\geq 2$ and the center of $\pi_1^{orb}(\mathcal O(K,m))$ is trivial.
Then, the center of $\pi_1(S^5\setminus \tau_{m_2}(\tau_{m_1}(K)))$ is isomorphic to $\mathbb Z$, generated by $x_1^m$, and the quotient group of $\pi_1(S^5\setminus \tau_{m_2}(\tau_{m_1}(K)))$ by its center is 
isomorphic to $\pi_1^{orb}(\mathcal O(K,m))$.
\end{lemma}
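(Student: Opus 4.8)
The plan is to start from the presentation \eqref{eq31} in Lemma~\ref{lemma31} and eliminate the two auxiliary generators $h_1$ and $h_2$. The relators $\mu_1^{m_1}h_1$ and $\mu_2^{m_2}h_2$, together with $\mu_1=\mu_2=x_1$, give $h_1=x_1^{-m_1}$ and $h_2=x_1^{-m_2}$, so both can be removed by Tietze transformations. After this substitution the relator $h_1h_2h_1^{-1}h_2^{-1}$ becomes trivial, while $x_ih_1x_i^{-1}h_1^{-1}$ and $x_ih_2x_i^{-1}h_2^{-1}$ turn into $[x_i,x_1^{m_1}]$ and $[x_i,x_1^{m_2}]$. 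Hence
\[
\pi_1(S^5\setminus \tau_{m_2}(\tau_{m_1}(K)))\cong\langle x_1,\ldots,x_u\mid r_1,\ldots,r_v,\,[x_i,x_1^{m_1}],\,[x_i,x_1^{m_2}]\ (1\le i\le u)\rangle.
\]

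Next I would collapse the two families of commutator relators into one. Writing $m=am_1+bm_2$ with $a,b\in\mathbb Z$ by B\'ezout, the element $x_1^m=(x_1^{m_1})^a(x_1^{m_2})^b$ commutes with every $x_i$ as soon as $x_1^{m_1}$ and $x_1^{m_2}$ do; conversely $x_1^{m_1}$ and $x_1^{m_2}$ are powers of $x_1^m$ since $m\mid m_1$ and $m\mid m_2$, so they commute with every $x_i$ whenever $x_1^m$ does. Thus the relations $[x_i,x_1^{m_1}]=[x_i,x_1^{m_2}]=1$ are equivalent to $[x_i,x_1^m]=1$, and with $G:=\pi_1(S^5\setminus \tau_{m_2}(\tau_{m_1}(K)))$ we obtain
\[
G\cong\langle x_1,\ldots,x_u\mid r_1,\ldots,r_v,\,[x_i,x_1^m]\ (1\le i\le u)\rangle.
\]
In particular the cyclic subgroup $C=\langle x_1^m\rangle$ is central in $G$.

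I would then identify the quotient by $C$. Since $C$ is central, $G/C$ is obtained by adjoining the relator $x_1^m$, which renders each $[x_i,x_1^m]$ redundant; by \eqref{eq00} with $\mu=x_1$ this yields a central extension $1\to C\to G\to \pi_1^{orb}(\mathcal O(K,m))\to 1$. The hypothesis that $Z(\pi_1^{orb}(\mathcal O(K,m)))$ is trivial now pins down the center of $G$: any $z\in Z(G)$ has image in $Z(G/C)=1$, whence $z\in C$, so $Z(G)\subseteq C$; as $C$ is central the reverse inclusion is automatic. Therefore $Z(G)=C=\langle x_1^m\rangle$ and $G/Z(G)\cong\pi_1^{orb}(\mathcal O(K,m))$.

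Finally I would verify that $C\cong\mathbb Z$, i.e. that $x_1^m$ has infinite order. Abelianizing $G$ kills all commutator relators, so $G^{ab}$ is just the abelianization of the knot group, giving $G^{ab}\cong H_1(S^3\setminus K)\cong\mathbb Z$ generated by the image of the meridian $x_1$; there $x_1^m$ maps to $m\neq 0$, so $x_1^m$ has infinite order and $C\cong\mathbb Z$. The only genuinely non-formal ingredient is the center-triviality hypothesis feeding the central-extension argument in the third paragraph; the rest is Tietze bookkeeping, a B\'ezout argument, and an abelianization, so I expect the main (and essentially the only) delicate point to be organizing the reduction so that the central extension structure is transparent and the hypothesis can be applied directly.
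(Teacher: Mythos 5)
Your proof is correct and follows essentially the same route as the paper's: eliminate $h_1,h_2$ from the presentation of Lemma~\ref{lemma31} via $h_1=x_1^{-m_1}$, $h_2=x_1^{-m_2}$, identify $G/\langle x_1^m\rangle$ with $\pi_1^{orb}(\mathcal O(K,m))$, and use the fact that an epimorphism carries $Z(G)$ into the (trivial) center of the quotient to conclude $Z(G)=\langle x_1^m\rangle$. In fact you make explicit two points the paper leaves implicit --- the B\'ezout argument showing $x_1^m$ is central in $G$, and the abelianization argument showing $x_1^m$ has infinite order, which is needed for the claim $Z(G)\cong\mathbb Z$ --- so your write-up is, if anything, slightly more complete than the paper's.
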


%Recall that $\pi_1^{orb}(\mathcal O(K,m))$ is given in the form in~\eqref{eq00}.

\begin{proof}
Take a Wirtinger presentation $\langle x_1,\ldots, x_u \mid\;  r_1,\ldots, r_v\rangle$ of $\pi_1(S^3\setminus K)$. 
Set $\mu_1=\mu_2=x_1$ and eliminate $h_1$ and $h_2$ from the presentation~\eqref{eq31} by using the relations $h_1=\mu_1^{-m_1}=x_1^{-m_1}$ and $h_2=\mu_2^{-m_2}=x_1^{-m_2}$ as
\[
\begin{split}
\pi_1(S^5\setminus \tau_{m_2}(\tau_{m_1}(K))) 
\cong \langle x_1,\ldots, x_u \mid\; & r_1,\ldots, r_v,\, x_2x_1^{-m_1}x_2^{-1}x_1^{m_1}, \ldots, x_ux_1^{-m_1}x_u^{-1}x_1^{m_1}, \\ 
& x_2x_1^{-m_2}x_2^{-1}x_1^{m_2}, \ldots, x_ux_1^{-m_2}x_u^{-1}x_1^{m_2}
\rangle.
\end{split}
\]
%Since $x_1,\ldots,x_u$ are the generators of the Wirtinger presentation,  we have
%\[
%   x_1^{m_i}=x_2^{m_i}=\cdots=x_u^{m_i}\,(=h_i^{-1})
%\]
%for $i=1,2$. Hence
%\[
%\begin{split}
%\pi_1(S^5\setminus \tau_{m_2}(\tau_{m_1}(K))) 
%\cong \langle x_1,\ldots, x_u \mid\; & r_1,\ldots, r_v,\,x_j^{m_1}x_k^{-m_1},\;\,x_j^{m_2}x_k^{-m_2}\rangle,
%\end{split}
%\]
%where $j,k\in\{1,\ldots,u\}$.
This group is the quotient group of $\pi_1(S^3\setminus K)$ by its normal subgroup $N$ generated by $x_kx_1^{-m_1}x_k^{-1}x_1^{m_1}$ and $x_kx_1^{-m_2}x_k^{-1}x_1^{m_2}$ for $k=2,\ldots,u$.
Let $G$ denote $\pi_1(S^5\setminus \tau_{m_2}(\tau_{m_1}(K)))$ for simplicity.
Then the above isomorphism is denoted as $G\cong  \pi_1(S^3\setminus K)/N$.
Since $N$ is contained in the normal closure $\langle\langle x_1^m \rangle\rangle$ of the group $\langle x_1^m\rangle$ generated by $x_1^m$, we have
\[    
\pi_1^{orb}(\mathcal O(K,m))
\cong \pi_1(S^3\setminus K)/\langle\langle x_1^m \rangle\rangle
\cong (\pi_1(S^3\setminus K)/N)/\langle\langle x_1^m \rangle\rangle\cong G/\langle x_1^m\rangle.
\]
Here the last isomorphism follows from the fact that $\langle x_1^m\rangle$ is central and hence normal in $G$. 

Let $p:G\to G/\langle x_1^m\rangle$ be the projection and let $Z(G)$ denote the center of $G$. 
Since $p$ is a group epimorphism, $p(Z(G))$ is contained in the center of $G/\langle x_1^m\rangle$.
Then, the assumption of the triviality of the center of $\pi_1^{orb}(\mathcal O(K,m))$ implies that $p(Z(G))$ is trivial.
Thus we have $Z(G)=\langle x_1^m\rangle$.
\end{proof}

\begin{proof}[Proof of Theorem~\ref{thm02}]
Suppose that $K$ is non-trivial, $m=\gcd(m_1,m_2)\geq 2$, and the center of $\pi_1^{orb}(\mathcal O(K,m))$ is trivial.
By Lemma~\ref{lemma33}, the quotient group of $\pi_1(S^5\setminus \tau_{m_2}(\tau_{m_1}(K)))$ 
by its center is isomorphic to $\pi_1^{orb}(\mathcal O(K,m))$. 
%By Lemma~\ref{lemmaabel}
Since the abelianization of  $\pi_1^{orb}(\mathcal O(K,m))$ is isomorphic to $\mathbb Z/m\mathbb Z$,
%Hence
if $m\geq 2$ then $\pi_1^{orb}(\mathcal O(K,m))$ is non-trivial.
On the other hand, 
%if $\tau_{m_2}(\tau_{m_1}(K))$ is isomorphic to the trivial $3$-knot, then $\pi_1(S^5\setminus \tau_{m_2}(\tau_{m_1}(K)))$ is isomorphic to $\mathbb Z$, which implies that the quotient group of $\pi_1(S^5\setminus \tau_{m_2}(\tau_{m_1}(K)))$ by its center is trivial. Hence $\tau_{m_2}(\tau_{m_1}(K))$ is non-trivial.
for a trivial $3$-knot $O$, $\pi_1(S^5\setminus O)$ is isomorphic to $\mathbb Z$ and hence the quotient group of $\pi_1(S^5\setminus O)$ by its center is trivial. Therefore, $\tau_{m_2}(\tau_{m_1}(K))$ is non-trivial.
\end{proof}

%\begin{proof}[Proof of Theorem~\ref{thm02}]
%By Lemma~\ref{lemma33}, the quotient group of $\pi_1(S^5\setminus \tau_{m_2}(\tau_{m_1}(K_i)))$ by its center is in the form $\langle x_1,\ldots, x_u \mid r_1,\ldots, r_v,\, x_1^m\rangle$, which is isomorphic to $\pi_1^{orb}(\mathcal O(K_i,m))$. Then the assertion follows from the same argument as in the proof of~\cite[Theorem 1.1]{FI23}.
%\end{proof}

%\begin{remark}
%The $3$-knot $\tau_1(\tau_0(K))$ is trivial for any $1$-knot $K$ by Remark~\ref{rem23}. Note that $\tau_0(K)$ is trivial if and only if $K$ is trivial. We do not know if $\tau_{m_2}(\tau_0(K))$ is trivial or not for $m_2\geq 2$. The $3$-knot $\tau_0(\tau_{m_1}(K))$ is trivial if and only if either $m_1=1$ or $K$ is trivial. This follows from Lemma~\ref{lemma21}, Remark~\ref{rem23}, \cite[Corollary~3]{Zee65}, and the fact that if $K$ is non-trivial and $m_1\geq 2$ then $\pi_1(S^4\setminus\tau_{m_1}(K))$ has a non-trivial dihedral group representation~\cite{F22}.
%\end{remark}

\begin{remark}\label{rem34}
From a knot $K$ in $S^3$ and a sequence of positive integers ${\bf m}=(m_1, m_2, \ldots, m_N)$,
we can obtain an $N+1$ knot in $S^{N+3}$ by applying $m_i$-twist-spinning for $i=1,2,\ldots,N$ to $K$ inductively. Let $\tau_{\bf m}(K)$ denote this $N+1$-knot in $S^{N+3}$.
A presentation of $\pi_1(S^{N+3}\setminus \tau_{\bf m}(K))$ can be obtained by the same way as in Lemma~\ref{lemma31}. 
Similar to Lemma~\ref{lemma33}, we can prove that 
if $m=\gcd(m_1,m_2,\ldots,m_N)\geq 2$ and the center of $\pi_1^{orb}(\mathcal O(K,m))$ is trivial,
then the quotient group of $\pi_1(S^{N+3}\setminus \tau_{\bf m}(K))$ by its center is 
isomorphic to $\pi_1^{orb}(\mathcal O(K,m))$.
Hence Theorem~\ref{thm02} holds 
in the higher dimensional cases also. On the other hand,
it is not clear if a theorem similar to Theorem~\ref{thm01} holds or not 
since we used the fact, proved by Pao, that a cyclic branched cover of $S^4$ along a twist spun knot is $S^4$ and its brancherd locus is a branched twist spin~\cite{Pa78},
but a similar statement is not known in the higher dimensional cases.
\end{remark}

\section{On the center of  $\pi_1^{orb}(\mathcal O(K,m))$ for a torus knot $K$}

%Let $K$ be a $(p,q)$-torus knot, where $p,q\geq 2$ and $\gcd(p,q)=1$.
%Then $\pi_1^{orb}(\mathcal O(K,m))$ is presented as
%\[
%\pi_1^{orb}(\mathcal O(K,m)) \cong \langle x, y \mid  x^py^{-q},\,(x^sy^r)^{m} \rangle,
%\]
%where $r$ and $s$ are integers satisfying $pr+qs=1$. 
%The element $x^p(=y^q)$ commutes the generators $x$ and $y$. 
%The abelianization of $\pi_1^{orb}(\mathcal O(K,m))$ is $\mathbb Z/m\mathbb Z$ and the image of $x^p$ by the abelianization is $pq\in \mathbb Z/m\mathbb Z$. Therefore, {\bf if $m$ does not divide $pq$, then $x^p$ is a non-trivial element in the center of $\pi_1^{orb}(\mathcal O(K,m))$.} We don't know if the center  $\pi_1^{orb}(\mathcal O(K,m))$ is non-trivial when $m$ divides $pq$.
%We don't know if the center of $\pi_1^{orb}(\mathcal O(K,m))$ for a torus knot $K$ is always non-trivial or not. If $p.q.m$ are not relatively coprime, then the center is non-trivial??? Note that the center of $\pi_1^{orb}(\mathcal O(K,m))$ is trivial if and only if $K$ is either a trivial knot or a torus knot~\cite{BZ66}.

Let $K$ be a $(p,q)$-torus knot, where $p,q\geq 2$ and $\gcd(p,q)=1$.
Let  $x$ and $y$ be the elements in $\pi_1(S^3\setminus K)$ corresponding to the preferred meridian-longitude pair of the standard torus on which $K$ lies.
The meridian of $K$ is given as $x^sy^r$, where $r$ and $s$ are integers satisfying $pr + qs = 1$.
By~\eqref{eq00}, the orbifold group $\pi_1^{orb}(\mathcal O(K,m))$ is presented as 
\begin{equation}\label{torusorb}
\pi_1^{orb}(\mathcal O(K,m)) \cong \langle x, y \mid  x^py^{-q},\,(x^sy^r)^{m} \rangle.
\end{equation}

\begin{theorem}\label{lemmatorus}
Let $K$ be a $(p,q)$-torus knot with $p,q \geq 2$ and ${\rm gcd}(p,q)=1$. 
For $m \geq 2$, if $m$ does not divide $pq$, then the center of  $\pi_1^{orb}(\mathcal O(K,m))$ has a non-trivial element.
\end{theorem}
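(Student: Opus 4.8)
The plan is to produce an explicit non-trivial central element of $\pi_1^{orb}(\mathcal O(K,m))$, namely the image of $z=x^p=y^q$. The starting observation is the classical description of the torus knot group $\pi_1(S^3\setminus K)\cong\langle x,y\mid x^py^{-q}\rangle$: for coprime $p,q\geq 2$ its center is infinite cyclic, generated by $z=x^p=y^q$. Since~\eqref{torusorb} exhibits $\pi_1^{orb}(\mathcal O(K,m))$ as the quotient of $\pi_1(S^3\setminus K)$ by the normal closure of $\mu^m=(x^sy^r)^m$, and since the image of a central element under any surjection remains central, the image of $z$ automatically lies in the center. The whole theorem therefore reduces to showing that this image is non-trivial precisely when $m\nmid pq$.

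First I would detect non-triviality by passing to the abelianization, where the computation is transparent. Abelianizing~\eqref{torusorb} yields $\mathbb{Z}^2$ modulo the subgroup generated by the rows of $\left(\begin{smallmatrix} p & -q \\ ms & mr\end{smallmatrix}\right)$, whose determinant is $m(pr+qs)=m$. Because $\gcd(p,q)=1$, the relation from $x^py^{-q}$ already collapses $\mathbb{Z}^2$ onto the infinite cyclic group generated by the meridian $\mu$, under which $\bar x\mapsto q$ and $\bar y\mapsto p$ (this is forced by $pr+qs=1$ together with the fact that $\mu$ generates $H_1$ of the knot complement). The remaining relation then imposes $m\mu=0$, so the abelianization is $\mathbb{Z}/m\mathbb{Z}$ with $\mu$ as a generator.

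Tracking $z=x^p$ through this identification, it maps to $p\bar x=pq$ in $\mathbb{Z}/m\mathbb{Z}$. Hence the image of $z$ in the abelianization is $pq\bmod m$, which is non-zero exactly when $m\nmid pq$. In that case $z$ is non-trivial in $\pi_1^{orb}(\mathcal O(K,m))$, and being central it is the desired non-trivial central element, completing the argument.

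I do not expect a serious obstacle: the two ingredients are the classical computation of the center of a torus knot group and a one-line determinant. The only point requiring care is the bookkeeping in the abelianization—confirming $\bar x\mapsto q$ and $\bar y\mapsto p$ (so that $z\mapsto pq$) rather than the opposite assignment—but this is pinned down by the meridian relation $\mu=x^sy^r$ with $pr+qs=1$. It is worth noting that the hypothesis $m\nmid pq$ is exactly what this approach needs, and that when $m\mid pq$ one would instead have $z\mapsto 0$ in the abelianization, so a different (and presumably more delicate) invariant would be required; this matches the contrapositive flavor of Theorem~\ref{thm02}.
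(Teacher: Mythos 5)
Your proposal is correct and follows essentially the same route as the paper: both exhibit $x^p=y^q$ as a central element inherited from the center of the torus knot group, and both detect its non-triviality in the abelianization $\mathbb{Z}/m\mathbb{Z}$ by pinning down $\bar x\mapsto q$, $\bar y\mapsto p$ (the paper solves the congruences $as+br\equiv 1$ and $pa\equiv qb \bmod m$ directly, while you first collapse $\mathbb{Z}^2$ by the primitive vector $(p,-q)$ over $\mathbb{Z}$, but this is only a cosmetic difference in bookkeeping). Your conclusion $A(x^p)=pq\not\equiv 0 \bmod m$ under the hypothesis $m\nmid pq$ matches the paper's argument exactly.
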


Hence Lemma~\ref{lemma33} does not hold for these torus knots.

\begin{proof}
Let $A:\pi_1^{orb}(\mathcal O(K,m))\to \mathbb Z/m\mathbb Z$ be the abelianization map.
%Applying the abelianization map $\bar A$ in the proof of Lemma~\ref{lemmaabel} to~\eqref{torusorb}, the abelizaition of $\pi_1^{orb}(\mathcal O(K,m))$ is
%\[
%   \langle x, y \mid  x^py^{-q},\,(x^sy^r)^{m},\; xyx^{-1}y^{-1} \rangle\cong\mathbb Z/m\mathbb Z.
%\]
%corresponds to the operation of adding the new relator $xyx^{-1}y^{-1}$ to $\langle x, y \mid  x^py^{-q},\,(x^sy^r)^{m} \rangle$.
Since $x^p$, which is equal to $y^q$, corresponds to an element in the center of $\pi_1(S^3\setminus K)$, 
$x^p$ is an element in the center of $\pi_1^{orb}(\mathcal O(K,m))$.
It is suffice to show that $x^p$ is non-trivial in $\pi_1^{orb}(\mathcal O(K,m))$.
For $n\in\mathbb Z$, let $\bar n\in\mathbb Z/m\mathbb Z$ denote the image of $n$ by $A$.
Set $A(x) = \bar a$ and $A(y) = \bar b$,
where $a, b\in\mathbb Z$.
Since $x^sy^r$ is a meridian of $K$, 
%$A(x^sy^r)=1\in A(\pi_1(S^3 \setminus K))=H_1(S^3\setminus K ; \mathbb Z)\cong\mathbb Z$. Hence 
$A(x^sy^r)\equiv \bar a s+\bar b r\equiv 1 \mod m$.
% from the notion in the previous paragraph.
Since $pr+qs=1$, the solution of $a s+b r=1$ is given in the form 
$a=q-tr$ and $b=p+ts$ for $t\in\mathbb Z$.
Since $x^p=y^q$, we have
$\overline{p(q-tr)}=A(x^p)=A(y^q)=\overline{q(p+ts)}$. 
This and $pr+sq=1$ imply that $t=0\mod m$. 
Hence we have  $A(x) = \bar q$ and $A(y) = \bar p$.
Since $\bar p\bar q \ne \bar 0$ by the assumption,
we have $A(x^p) \ne \bar 0$.
%If $\bar t\ne \bar 0$ then any element in $\bar A^{-1}(\bar t)\subset\pi_1^{orb}(\mathcal O(K,m))$ is non-trivial. Hence  $\bar A(x^p) \ne \bar 0$ implies that $x^p$ is non-trivial in $\pi_1^{orb}(\mathcal O(K,m))$.
Hence $x^p$ is non-trivial in $\pi_1^{orb}(\mathcal O(K,m))$.
\end{proof}

%\begin{remark}
%By definition of $\bar A $, the $m$-th power of any element in $\pi_1^{orb}(\mathcal O(K,m))$ is sent to $\bar 0$. This implies that we cannot use the same argument of Theorem~\ref{lemmatorus} for the case where $m$ divides $pq$.
%\end{remark}

\end{document}